\documentclass[10pt]{amsart}
\usepackage{mathpazo}
\usepackage{amssymb}
\usepackage{amsmath}
\usepackage{graphicx}
\usepackage{hyperref}
\hypersetup{
colorlinks=true,
linkcolor=blue,
filecolor=blue,
urlcolor=blue,
citecolor=blue,
pdfpagemode=Fullscreen
}
\usepackage{xcolor}
\usepackage{soul}
\DeclareFontFamily{U}{mathx}{\hyphenchar\font45}
\DeclareFontShape{U}{mathx}{m}{n}{
      <5> <6> <7> <8> <9> <10>
      <10.95> <12> <14.4> <17.28> <20.74> <24.88>
      mathx10
      }{}
\DeclareSymbolFont{mathx}{U}{mathx}{m}{n}
\DeclareFontSubstitution{U}{mathx}{m}{n}
\DeclareMathAccent{\widecheck}{0}{mathx}{"71}

\newtheorem{theorem}{Theorem}[section]
\newtheorem{lemma}[theorem]{Lemma}
\newtheorem*{Acknowledgement}{\textnormal{\textbf{Acknowledgement}}}
\theoremstyle{definition}
\newtheorem{definition}[theorem]{Definition}

\newtheorem{corollary}[theorem]{Corollary}
\newtheorem{proposition}[theorem]{Proposition}
\newtheorem{proof of Theorem 1.1}[theorem]{Proof of Theorem 1.1}

\numberwithin{equation}{section}
\newcommand{\beqa}{\begin{eqnarray*}}
\newcommand{\eeqa}{\end{eqnarray*}}
\newcommand{\beqn}{\begin{eqnarray}}
\newcommand{\eeqn}{\end{eqnarray}}

\newcounter{cnt1}
\newcounter{cnt2}
\newcounter{cnt3}
\newcommand{\blr}{\begin{list}{$($\roman{cnt1}$)$}
        {\usecounter{cnt1} \setlength{\topsep}{0pt}
                \setlength{\itemsep}{0pt}}}
\newcommand{\bla}{\begin{list}{$($\alph{cnt2}$)$}
        {\usecounter{cnt2} \setlength{\topsep}{0pt}
                \setlength{\itemsep}{0pt}}}
\newcommand{\bln}{\begin{list}{$($\arabic{cnt3}$)$}
        {\usecounter{cnt3} \setlength{\topsep}{0pt}
                \setlength{\itemsep}{0pt}}}
\newcommand{\el}{\end{list}}
\newtheorem{thm}{Theorem}

\newtheorem{Def}[thm]{Definition}

\newtheorem{rem}[thm]{Remark}
\newcommand{\Rem}{\begin{rem} \rm}
\newcommand{\bdfn}{\begin{Def} \rm}
\newcommand{\edfn}{\end{Def}}

\begin{document}

\title[\tiny{Bijections on the set of extreme points in a compact convex set}]{Bijections on the set of extreme points in a compact convex set}


\author[Karn, Seal]{Anil Kumar Karn$^{1}$,\ Susmita Seal$^{2}$}

\address{{$^{1}$} School of Mathematical Sciences, National Institute of Science Educational and Research Bhubaneswar, An OCC of Homi Bhabha National Institute, P.O. - Jatni, District - Khurda, Odisha - 752050, India}
\email{anilkarn@niser.ac.in}

\address {{$^{2}$} School of Mathematical Sciences, National Institute of Science Educational and Research Bhubaneswar, An OCC of Homi Bhabha National Institute, P.O. - Jatni, District - Khurda, Odisha - 752050, India}
		\email{susmitaseal1996@gmail.com}

\subjclass{46B40, 46L70, 46L05, 17C36}

\keywords{Compact convex set, gauge-preserving map, gauge-reversing map, extremal vector, co-extremal vector}

\maketitle
\begin{abstract}
In a recent work, Roelands and Tiersma \cite{RT} proved that, for a compact convex set $K$, the space $A(K)$ of all real-valued continuous affine functions on $K$, is a JB-algebra if and only if there is a gauge-reversing bijection on $A_c(K)$, the set of positive real-valued continuous affine functions on $K$. In this paper, we show that every such gauge-reversing bijection on $A_c(K)$ is completely determined by the induced bijection on the set of extreme points of $K$.
\end{abstract}

\section{Introduction and Preliminaries}

Gelfand-Naimark theorem states that any unital, abelian C$^*$-algebra is precisely $C(K)$, the space of all conplex valued continuous functions on $K$, where $K$ is a suitable compact space \cite{GN}. However, Sherman \cite{S51} prove that this characterization fails to work if the C$^*$-algebra is non-abelian. Kadison took this discussion further and proved that the self-adjoint part of a unital C$^*$-algebra, or more generally, the self-adjoint part of any self-adjoint unital subspace of it, is $A(K)$, the space of all affine real valued continuous functions on $K$, where $K$ is a suitable compact, convex subset $K$ of a locally convex space \cite{K1}. It is also known that every unital JB-algebra is $A(K)$ for some suitable compact, convex subset $K$ of a locally convex space \cite{A, ASS}. Owing to this observation, compact convex sets become a central object in the interface of non-abelian operator algebras and order theoretic functional analysis. Recently, Roelands and Tiersma have given a characterization of unital JB-algebras among $A(K)$-spaces using gauge-reversing maps. 

Let $K$ be a compact convex set, and consider $A(K)$. Due to a fundamental result of Kadison \cite{K1}, $A(K)$ precisely a complete order unit space up to order isomorphism. This representation allows one to study order-theoretic properties through the geometry of affine functions. Let 
$$A_c(K) = \lbrace f \in A(K): f(x) \in (0, \infty) ~ \textrm{for all} ~ x \in K \rbrace.$$ 
Note that for any $f, g \in A_c(K)$, there exists $\lambda > 0$ such that $f \leqslant \lambda g$. The gauge, which quantify the relative size of elements of $A_c(K)$, are defined as follows: 
 
$$m(f, g) = \sup\{\lambda > 0 : \lambda g \leq f\}, \quad
M(f, g) = \inf\{\mu > 0 : f \leq \mu g\} .$$
These functions are multiplicative inverses of each other, that is, $m(g,f)=M(f,g)^{-1}$ for all $f,g \in A_c(K)$. Moreover, the gauge function provides the necessary foundation for defining a natural distance function on $A_c(K)$, known as Thompson's metric. See \cite{NS} for details.
\begin{definition}\label{def1}
\cite{NS} Let $K$, K' be two compact convex sets.
A bijection $\Phi:A_c(K) \rightarrow A_c(K')$ is called 
\begin{enumerate}
\item gauge-preserving if for all $f, g \in A_c(K)$,  $$m(\Phi(f), \Phi(g)) = m(f, g) \ \ (\mathrm{equivalently} \ M(\Phi(f), \Phi(g)) = M(f, g)).$$ 
\item gauge-reversing if for all $f, g \in A_c(K)$, 
 $$m(\Phi(f), \Phi(g)) = m(g, f) \ \ (\mathrm{equivalently} \  M(\Phi(f), \Phi(g)) = M(g, f)).$$ 
\end{enumerate}
\end{definition}
Gauge-preserving bijections are of particular interest because of their linear structure. Any linear isomorphism between $A_c(K)$ and $A_c(K')$ is gauge-preserving, and, conversely, Noll and Schäffer \cite{NS} showed that every gauge-preserving bijection is necessarily a linear isomorphism. In contrast, gauge-reversing bijections on $A_c(K)$ play a significant role in the characterization of JB-algebras within $A(K)$ spaces (see \cite{RT}). 

The main objective of this paper is to show that both gauge-preserving and gauge-reversing bijections on $A_c(K)$ are completely determined by the induced bijections between the extreme points of the underlying compact convex sets.

The problem of characterizing algebraic structures within $A(K)$ spaces has a rich history and has been studied extensively in recent literature. Combining Kadison’s representation theorem \cite{A}, results of Walsh \cite{W}, and the Koecher-Vinberg theorem \cite{K,V}, one obtains that
 if $\partial_e(K)$, the set of extreme vectors of $K$, is finite, then a gauge-reversing bijection $\Phi: A_c(K)\rightarrow A_c(K)$ exists if and only if $A(K)$ is a JB-algebra.
Extensions of this result to the infinite-dimensional setting were developed in \cite{LRI, LRW}, where, under additional assumptions, such bijections were shown to exist precisely when
$A(K)$ belongs to a certain class of reflexive JB-algebras, such as spin factors and JH-algebras. A complete characterization was recently obtained in \cite[Theorem 1.1]{RT}, establishing that a gauge-reversing bijection $\Phi: A_c(K)\rightarrow A_c(K)$ exists if and only if $A(K)$ is a JB-algebra. 

Both gauge-preserving and gauge-reversing bijections admit a characterization in terms of homogeneity and order structure (see  \cite{LRW,NS}). Moreover, maps of both types are infinitely differentiable \cite{RT}. A prime example of a gauge-reversing bijection arises from the inverse map, whenever the inverse exists--for instance, in the setting of a JB-algebra. Suppose $A(K)$ is a JB-algebra. Then $A_c(K)=\{f^2: f\in A(K), f \ne 0 \}$ and $\frac{1}{g}\in A_c(K)$ for each $g\in A_c(K)$. Consequently, the map $\Phi: A_c(K)\rightarrow A_c(K)$ defined by $g\mapsto \frac{1}{g}$ is a gauge-reversing bijection. In this paper, we prove the converse.

\begin{theorem}\label{coro}
	Let $K$ and $K'$ be two compact convex sets.
	The following statements are equivalent: 
	\begin{enumerate}
		\item There exists a gauge-reversing bijection $\Phi : A_c(K) \to A_c(K')$ such that $\Phi(\mathbb{1}_K) = \mathbb{1}_{K'}.$
		\item There exists a bijection $\alpha : \partial_e K \to \partial_{e} K'$ such that for every $f \in A_c(K)$ and $g \in A_c(K')$, the functions $\frac{1}{g} \circ \alpha$ and $\frac{1}{f} \circ \alpha^{-1}$ are continuous and admit continuous affine extensions to the respective extreme boundaries.
	\end{enumerate}
	Moreover, when these equivalent conditions hold, $\Phi$ and its inverse are completely determined on the extreme boundaries by
	\begin{equation}\label{gr}
		\begin{split}
			\Phi(f)|_{\partial_{e} K'} = \frac{1}{f} \circ \alpha^{-1} \quad \text{for every } f \in A_c(K)\\
			\Phi^{-1}(g)|_{\partial_e K} = \frac{1}{g} \circ \alpha \quad \text{for every } g \in A_c(K').
		\end{split}
	\end{equation}
\end{theorem}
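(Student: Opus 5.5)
We prove (1)$\Rightarrow$(2) by extracting $\alpha$ from the order-reversing structure of $\Phi$ on "extremal" directions, and (2)$\Rightarrow$(1) by defining $\Phi$ through formula (\ref{gr}) and checking that it is gauge-reversing.

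\textbf{Step 1: gauges in terms of extreme points.} For $f,g\in A_c(K)$, $\lambda g\le f$ holds iff $\lambda g(x)\le f(x)$ on $\partial_e K$, by Krein--Milman and the continuity and affineness of $f,g$. Hence
\[
m(f,g)=\inf_{x\in\partial_e K}\frac{f(x)}{g(x)},\qquad M(f,g)=\sup_{x\in\partial_e K}\frac{f(x)}{g(x)} .
\]

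\textbf{Step 2: (2)$\Rightarrow$(1).} Given $\alpha$, define $\Phi(f)$ as the continuous affine extension of $\frac1f\circ\alpha^{-1}$. The extension is determined by its values on $\partial_e K'$, and it is strictly positive because it is positive on extreme points. Define $\Psi(g)$ symmetrically as the extension of $\frac1g\circ\alpha$. On extreme points $\Psi\Phi(f)=f$ and $\Phi\Psi(g)=g$, so $\Phi$ is a bijection with $\Phi(\mathbb 1_K)=\mathbb 1_{K'}$. By Step 1, substituting $y=\alpha(x)$,
\[
m(\Phi f,\Phi g)=\inf_{y}\frac{g(\alpha^{-1}y)}{f(\alpha^{-1}y)}=m(g,f),
\]
so $\Phi$ is gauge-reversing.

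\textbf{Step 3: (1)$\Rightarrow$(2).} By Step 1, $M(\Phi f,\Phi g)=M(g,f)$ gives that $\Phi$ is antitone and satisfies $\Phi(tf)=t^{-1}\Phi(f)$; this is the homogeneity/order characterization in \cite{LRW,NS}. To find $\alpha$, attach to each $x\in\partial_e K$ a family of functions "concentrated" at $x$: for instance the functions $f$ with $f\le \mathbb 1$ and $f(x)=1$, together with the near-degenerate elements $f_\varepsilon$ satisfying $f_\varepsilon(x)=1$ while being close to $\varepsilon$ on far-away extreme points. Their images under $\Phi$ are large except near a single point $\alpha(x)\in\partial_e K'$. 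The same points can be detected via gauges: $M(\mathbb 1,f)=1/\min f$ and $m(\mathbb 1,f)=1/\max f$, and these quantities are preserved in the reversed form. Concretely, I would characterize extreme points as the points where maximal faces of the order interval meet, for example through the condition $M(f,g)=f(x)/g(x)$ being attained. Gauge-reversal transports the attainment sets, which gives a map $\alpha$ with the property
\[
\Phi(f)(\alpha(x))=\frac1{f(x)} .
\]
The same construction applied to $\Phi^{-1}$ yields the inverse map, so $\alpha$ is a bijection, and formula (\ref{gr}) follows immediately. Continuity and the existence of affine extensions in (2) then hold because $\frac1f\circ\alpha^{-1}=\Phi(f)|_{\partial_eK'}$ is the restriction of an element of $A(K')$.

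\textbf{Main obstacle.} The hard step is the pointwise identity $\Phi(f)(\alpha(x))=1/f(x)$. Step 1 only controls infima and suprema over all of $\partial_e K$, not individual values. In infinite dimensions, peak-type functions at extreme points need not exist exactly, since extreme points need not be exposed. I would first try to use approximate peaking, which is available because each $x\in\partial_eK$ has a neighbourhood base of slices. I would then show that the point where $\Phi(f_\varepsilon)$ attains its supremum converges to a well-defined extreme point as $\varepsilon\to0$. If this fails, I would fall back on invoking \cite[Theorem 1.1]{RT}: $A(K)$ and $A(K')$ are then JB-algebras, so $\Phi=U\circ(\cdot)^{-1}$ with a Jordan isomorphism $U$. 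Since $\Phi$ is unital, $U$ is induced by an affine homeomorphism of state spaces, which maps pure states to pure states and so gives $\alpha$. The caveat is that $(1/f)(\rho)\ne 1/f(\rho)$ for non-abelian pure states $\rho$, so in that case the formula must be read as in (\ref{gr}) via the chosen extension.
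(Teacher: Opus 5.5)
Your direction (ii)$\Rightarrow$(i) is correct, and more direct than the paper's. The paper checks antitonicity and homogeneity of degree $-1$ on $\partial_e K'$, lifts them by its boundary comparison results, and appeals to the characterization in \cite{LRW}. Your formula $m(f,g)=\inf_{\partial_e K}f/g$ gives $m(\Phi f,\Phi g)=m(g,f)$ in one line.

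The gap is (i)$\Rightarrow$(ii). Step 3 never constructs $\alpha$ or proves $\Phi(f)(\alpha(x))=1/f(x)$. Transporting attainment sets could at best give $\Phi(f)(\alpha(x))/\Phi(g)(\alpha(x))=g(x)/f(x)$ at points where $g/f$ is maximal. Taking $g$ to be the unit, this gives $\Phi(f)(\alpha(x))=1/f(x)$ only at minimum points of $f$, not at every extreme point. Your fallback, as you say, yields the Jordan inverse $f^{-1}(x)$ rather than $1/f(x)$. That caveat is not a question of how to read (\ref{gr}); it is a counterexample, so the gap cannot be closed.

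Let $K=K'$ be the closed unit disc in $\mathbb{R}^2$. Then $A(K)$ is the spin factor $\mathbb{R}\oplus\mathbb{R}^2$ and $A_c(K)$ is the open Lorentz cone. Let $\Phi$ be the Jordan inverse, which is a unital gauge-reversing bijection. For $f(x)=1+s\,u\cdot x$ with $|u|=1$ and $0\le s<1$, one has $\Phi(f)(y)=(1-s\,u\cdot y)/(1-s^2)$. So $\Phi(f)(y)=1/f(z)$ for all such $f$ says $(1-s\,u\cdot y)(1+s\,u\cdot z)=1-s^2$. The coefficients of $s$ and $s^2$ force $z=y$ and $(u\cdot y)^2=1$ for every unit vector $u$, which is absurd. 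Hence no bijection $\alpha$ satisfies (\ref{gr}) for this $\Phi$. Combining your Step 2 with linearity of unital gauge-preserving maps, no $\alpha$ satisfies (ii) for the disc at all.

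The paper's argument for this direction is designed to supply precisely the exact peak functions you lacked. It passes to $\Phi_{lsc}:A_{lsc}(K)\to A_{usc}(K')$ and notes that $I^{\infty}_{\{\tilde\psi\}}$ is co-extremal, so its image is a normalized extremal vector of $A_{usc}(K')$. It identifies that vector with $p_\psi$, where $p_\psi(\psi)=1$ and $p_\psi=0$ on $\partial_e K'\setminus\{\psi\}$ (Theorem \ref{char ext} and Proposition \ref{ext prop}). It then reads off $\Phi(g)(\psi)=m(I^{\infty}_{\{\tilde\psi\}},g)=1/g(\tilde\psi)$.

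That identification is where it fails. On the square $[0,1]^2$ with $\psi=(0,0)$, the functions $1-\xi_1$ and $1-\xi_2$ have no common minorant in $\{h\in A_{usc}(K):h(\psi)=1\}$. Such an $h$ would vanish at the other three vertices, although $h(0,0)+h(1,1)=h(1,0)+h(0,1)$. The separation step in the proof of Theorem \ref{char ext} produces $h=1-\xi_1-\xi_2$ there, and the nonnegativity of $h$ is never checked. On the disc, $A_{usc}(K)$ consists of the nonnegative continuous affine functions. Its normalized extremal vectors are $x\mapsto(1+\psi\cdot x)/2$, which do not vanish on $\partial_e K\setminus\{\psi\}$.

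So neither your sketch nor the paper's proof establishes (i)$\Rightarrow$(ii). The pointwise formula is right when $A(K)\cong C(X)$, where the Jordan inverse is the pointwise reciprocal, but it is false in general.
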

We also prove 
\begin{theorem}\label{precoro}
	Let $K$ and $K'$ be two compact convex sets.
	The following statements are equivalent: 
	\begin{enumerate}
		\item There exists a gauge-preserving bijection $\Phi : A_c(K) \to A_c(K')$ such that $\Phi(\mathbb{1}_K) = \mathbb{1}_{K'}.$ 
		\item There exists a bijection $\alpha : \partial_e K \to \partial_{e} K'$ such that for every $f \in A_c(K)$ and $g \in A_c(K')$, the functions $g \circ \alpha$ and $f \circ \alpha^{-1}$ are continuous and admit continuous affine extensions to the respective extreme boundaries. 
	\end{enumerate}
	Moreover, when these equivalent conditions hold, $\Phi$ and its inverse are completely determined on the extreme boundaries by
	\begin{equation}\label{gp}
		\begin{split}
			\Phi(f)|_{\partial_{e} K'} = f \circ \alpha^{-1} \quad \text{for every } f \in A_c(K) \\
			\Phi^{-1}(g)|_{\partial_e K} = g \circ \alpha \quad \text{for every } g \in A_c(K')
		\end{split}
	\end{equation}
\end{theorem}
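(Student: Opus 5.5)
We have to prove Theorem \ref{precoro}. We will use the Noll--Sch\"affer theorem \cite{NS}: every gauge-preserving bijection $\Phi : A_c(K)\to A_c(K')$ is the restriction of a linear order isomorphism $T : A(K)\to A(K')$. The plan is to turn $T$ into an affine homeomorphism $K'\to K$ and read $\alpha$ off from that map.

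\emph{(1) $\Rightarrow$ (2).} Let $T$ be the linear extension of $\Phi$. Since $\Phi$ maps $A_c(K)$ onto $A_c(K')$, which is the interior of the positive cone, $T$ is a positive linear bijection with positive inverse. Because $\Phi(\mathbb{1}_K)=\mathbb{1}_{K'}$, $T$ is a unital order isomorphism.

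By Kadison's representation, $K$ is identified with the state space of $A(K)$ via point evaluations, and likewise for $K'$. The dual map $T^*$ therefore sends states of $A(K')$ to states of $A(K)$. This gives an affine homeomorphism $\psi : K'\to K$ (weak$^*$ continuous) with
\[
T(f)=f\circ\psi \quad \text{for all } f\in A(K).
\]
Affine bijections preserve extreme points, so $\psi$ restricts to a bijection $\partial_e K'\to\partial_e K$. We set $\alpha=(\psi|_{\partial_e K'})^{-1}$.

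Then $\Phi(f)|_{\partial_e K'} = f\circ\alpha^{-1}$, and applying the same argument to $\Phi^{-1}$ gives $\Phi^{-1}(g)|_{\partial_e K}=g\circ\alpha$. This is \eqref{gp}. In particular $f\circ\alpha^{-1}$ is the restriction of the continuous affine function $\Phi(f)$, and $g\circ\alpha$ is the restriction of $\Phi^{-1}(g)$, which is what (2) asks for.

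\emph{(2) $\Rightarrow$ (1).} For $f\in A_c(K)$, let $\Phi(f)\in A(K')$ be the continuous affine extension of $f\circ\alpha^{-1}$. Uniqueness of this extension is the main obstacle. An element of $A(K')$ is determined by its values on $\partial_e K'$: by Krein--Milman, $K'$ is the closed convex hull of $\partial_e K'$, and a continuous affine function is determined on the closed convex hull of a set by its values on that set. This uniqueness also makes $\Phi$ linear, since $f\circ\alpha^{-1}+g\circ\alpha^{-1}$ extends to $\Phi(f)+\Phi(g)$. It gives $\Phi(\mathbb{1}_K)=\mathbb{1}_{K'}$ as well.

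Positivity comes from the fact that for $h\in A(K')$, $\min_{K'} h$ is attained at an extreme point (Bauer's minimum principle). If $f\ge \epsilon>0$ on $K$, then $f\circ\alpha^{-1}\ge\epsilon$ on $\partial_e K'$, so $\Phi(f)\ge\epsilon$ and $\Phi(f)\in A_c(K')$.

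Define $\Psi$ symmetrically from $g\mapsto g\circ\alpha$. On extreme points, $\Psi(\Phi(f))$ and $f$ agree, so by the uniqueness above $\Psi\circ\Phi=\mathrm{id}$, and similarly $\Phi\circ\Psi=\mathrm{id}$. Hence $\Phi$ is a bijection on the cones and is the restriction of a linear order isomorphism.

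Finally, gauges are computed on extreme points. The relation $\lambda g\le f$ holds on $K$ if and only if it holds on $\partial_e K$, again because $\min_K(f-\lambda g)$ is attained at an extreme point. Since $\alpha$ is a bijection of extreme boundaries, $m(\Phi(f),\Phi(g))=m(f,g)$, so $\Phi$ is gauge-preserving. The formulas \eqref{gp} hold by construction.
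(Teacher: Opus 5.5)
Your proof is correct. For (1)$\Rightarrow$(2) it takes a genuinely different route from the paper; for (2)$\Rightarrow$(1) it is essentially the paper's argument.

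\textbf{(1)$\Rightarrow$(2).} The paper never linearizes $\Phi$. It extends $\Phi$ to the extended cones via Roelands--Tiersma and identifies the normalized extremal vectors of $A_{usc}(K)$ as the $p_\psi$ and the normalized co-extremal vectors of $A_{lsc}(K)$ as the $I^{\infty}_{\{\psi\}}$. It then defines $\alpha$ by matching these vectors under $\Phi_{usc}$ or $\Phi_{lsc}$ (Proposition \ref{prephilsc}), and reads off $\Phi(g)(\alpha(\tilde\psi))=g(\tilde\psi)$ from the identity $m(g,p_{\tilde\psi})=g(\tilde\psi)$.

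You instead use the Noll--Sch\"affer linearity theorem, which the paper itself quotes in the introduction, together with Kadison duality. This gives an affine homeomorphism $\psi:K'\to K$ with $\Phi(f)=f\circ\psi$. Your route is shorter and bypasses the extended-cone machinery and the Roelands--Tiersma extension theorems entirely. It also proves more: $\alpha$ is the restriction to $\partial_e K$ of an affine homeomorphism $K\to K'$.

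What the paper's route buys is uniformity. It is designed to mirror the gauge-reversing case of Theorem \ref{coro}, where $\Phi$ is not linear and your duality argument has no analogue.

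\textbf{(2)$\Rightarrow$(1).} You match the paper: define $\Phi(f)$ as the unique continuous affine extension, and use that elements of $A(K')$ are determined by, and attain their minima on, $\partial_e K'$. The only difference is that you verify gauge preservation directly on the extreme boundary, rather than through the homogeneity and monotonicity criterion of \cite[Lemma 2.5]{LRW}.
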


In the course of our work, we make use of the notion of extended cones of $A_c(K)$, introduced in \cite{RT}. We denote by $A(K)_+^\infty$ the collection of all affine functions from 
$K$ into the extended interval $[0, \infty]$.
This set forms a monotone complete partially ordered set with respect to 
the natural pointwise order: for 
$f,g\in A(K)_+^\infty$ we write $f \leqslant g$ if $f(x) \leq g(x)$ for every $x \in K$. 
The set $A(K)_+^\infty$ is also equipped with pointwise addition and scalar multiplication by elements of $[0, \infty]$, where we adopt the convention $0 \cdot \infty = 0$. The two extended cones of $A_c(K)$ in $A(K)_+^\infty$ are following: 
$$A_{usc}(K) := \{g : K \to [0, \infty) : g \  \text{is affine upper semicontinuous}\}$$
$$A_{lsc}(K) := \{h : K \to (0, \infty] : g \ \text{is affine lower semicontinuous}\}.$$
The cone $A_{lsc}(K)$ is closed under addition and multiplication by scalars in $(0, \infty],$ while $A_{usc}(K)$ is closed under addition and multiplication by scalars in $[0, \infty)$. Moreover, we have that
$$A_{usc}(K) \cap A_{lsc}(K) = A_c(K).$$
The gauge functions $m$ and $M$ from Definition $\ref{def1}$ admit natural extensions to $A(K)_+^\infty$, see \cite[Definition 3.9]{RT} for details.
Moreover, each gauge-reversing (resp. gauge-preserving) bijection between $A_c(K)$ and $A_c(K')$ admit unique extensions to the
corresponding extended cones \cite[Theorem 3.14, 3.15]{RT}.

We shall now give an outline of the organization of the paper.
In section $\ref{Sec3}$ we show that any extremal vector of $A_{usc}(K)$  is of the form $p_{\psi}:= \inf \{h\in A_{usc}(K) : h(\psi)=1\}$ for some $\psi \in \partial_e K.$ Using this description, we prove that $A_{usc}(K)$ is strongly atomic, generalizing \cite[Theorem 3.20]{RT}, where the authors established this property under the assumption that a gauge-reversing bijection exists between the cones.
In section $\ref{Sec4}$ we introduce the notion of a co-extremal vector and show that any co-extremal vectors of $A_{lsc}(K)$ is of the form $I_{\{\psi\}}^{\infty}$ for some $\psi \in \partial_e(K)$, where $I_{\{\psi\}}^{\infty}(\rho) =1$ if $\rho = \psi$ and $I_{\{\psi\}}^{\infty}(\rho) =0$ if $\rho \neq \psi.$ 
Finally, in section $\ref{Sec5}$ we prove the main result of the paper, that is Theorems \ref{coro} and \ref{precoro}, and in section $\ref{Sec6}$ we discuss some applications.

\section{Extremal vectors in $A_{usc}(K)$} \label{Sec3}

Let $E \subset A(K)_{+}^{\infty}$ be a subcone.
A vector $p \in E$ is called extremal vector of $E$ if  $$\{f \in E : f \leqslant p\} = \{tp : 0 \leqslant t \leqslant 1\}.$$ The collection of all extremal vectors in $E$ is denoted $\text{ext}\,E$. The cone $E$ is said to be strongly atomic if every element $f \in E$ can be expressed as
 $f = \sup\{p \in \text{ext}\,E : p \leq f\} \text{ in } E.$ Furthermore, an extremal vector $p$ in $A_{usc}(K)$ satisfying $M(p, \mathbb{1}_K) = 1$ is called a normalized extreme vector. The set of all such vectors is denoted by $$\text{ext}_{\mathbb{1}_K} A_{usc}(K) := \{p \in \text{ext}\,A_{usc}(K) : M(p, \mathbb{1}_K) = 1\}.$$

Let $D$ be a nonempty subset of $A(K)_{+}^{\infty}$. Then $D$ is called a nonincreasing net (resp. nondecreasing net) indexed by $(D,\leqslant)$ if
for any
$g_1,g_2 \in D$ there exists $g \in D$ such that $g \leqslant g_i$
(resp. $g \geqslant g_i$) for $i=1,2$. 

\begin{theorem}\label{noninc}
\cite[Lemma 3.4]{RT}
Let $K$ be a compact convex set.
\begin{enumerate}
    \item Each nonincreasing net in $A_{usc}(K)$ has an infimum in $A_{usc}(K)$, which is given pointwise.
    \item Each nondecreasing net in $A_{lsc}(K)$ has a supremum in $A_{lsc}(K)$, which is given pointwise.
\end{enumerate}
\end{theorem}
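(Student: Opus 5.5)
The natural approach is to check both claims pointwise. The key ingredients are the compatibility of the pointwise operations with affineness and semicontinuity, and the fact that, for affine functions into $[0,\infty]$, the pointwise infimum (resp. supremum) of a directed family is again affine. I treat part (1) in detail; part (2) is the order-dual argument with the inequalities reversed.

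For (1), let $D\subseteq A_{usc}(K)$ be a nonincreasing net and define $g(x)=\inf_{h\in D}h(x)$ for $x\in K$. Then $g$ takes values in $[0,\infty)$. It is upper semicontinuous, since $\{g<c\}=\bigcup_{h\in D}\{h<c\}$ is a union of open sets. To see that $g$ is affine, take $x,y\in K$ and $t\in[0,1]$. The inequality $g(tx+(1-t)y)\le t g(x)+(1-t)g(y)$ is obtained from directedness. Given $\varepsilon>0$, choose $h_1,h_2\in D$ with $h_1(x)<g(x)+\varepsilon$ and $h_2(y)<g(y)+\varepsilon$, and then choose $h\in D$ with $h\le h_1,h_2$. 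Then
\[
g(tx+(1-t)y)\le h(tx+(1-t)y)=t h(x)+(1-t)h(y)< t g(x)+(1-t)g(y)+\varepsilon .
\]
The reverse inequality is immediate, because $h(tx+(1-t)y)=t h(x)+(1-t)h(y)\ge t g(x)+(1-t)g(y)$ for every $h\in D$, and we take the infimum over $h$. Hence $g\in A_{usc}(K)$. It is plainly the greatest lower bound of $D$ in the pointwise order, and a fortiori in $A_{usc}(K)$.

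For (2), let $D\subseteq A_{lsc}(K)$ be a nondecreasing net and define $g=\sup_{h\in D}h$ pointwise, with values in $(0,\infty]$. Lower semicontinuity follows because $\{g>c\}=\bigcup_{h\in D}\{h>c\}$ is open. For affineness, the inequality $g(tx+(1-t)y)\ge t g(x)+(1-t)g(y)$ uses directedness: pick $h_1,h_2$ nearly attaining the suprema at $x$ and $y$ (or exceeding any prescribed bound if a value is $\infty$), and take an upper bound $h$ of both in $D$. The inequality $\le$ holds termwise and survives passing to the supremum.

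The only point needing care is the arithmetic in $[0,\infty]$ under the convention $0\cdot\infty=0$. The endpoint cases $t\in\{0,1\}$ are trivial, so one may assume $t\in(0,1)$, where the usual rules of extended arithmetic apply. With that convention, $g$ is affine, lies in $A_{lsc}(K)$, and is the least upper bound of $D$.

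I do not expect any step to be a genuine obstacle. The one nontrivial point is the use of directedness to get the inequality in the direction that termwise comparison does not give: $\le$ for the infimum in (1) and $\ge$ for the supremum in (2).
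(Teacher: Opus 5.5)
Your proof is correct and complete. The paper gives no proof of this statement and only imports it from \cite[Lemma 3.4]{RT}. Your direct pointwise verification establishes it: semicontinuity comes from $\{g<c\}=\bigcup_{h\in D}\{h<c\}$ (respectively $\{g>c\}=\bigcup_{h\in D}\{h>c\}$), directedness supplies the one inequality that termwise comparison does not give, and reducing to $t\in(0,1)$ handles the value $\infty$ in part (2).
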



\begin{theorem}\label{char ext}
Every element of $\mathrm{ext}_{\mathbb{1}_K} A_{usc}(K)$ is of the form
\[
p_\psi := \inf \{h \in A_{usc}(K) : h(\psi)=1\}
\]
for some $\psi \in \partial_e K$.
\end{theorem}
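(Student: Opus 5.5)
Let $p \in \mathrm{ext}_{\mathbb{1}_K} A_{usc}(K)$. The plan has three steps: locate a point of $K$ where $p$ attains its maximum value $1$; show that this point can be taken extreme; and show that $p$ coincides with $p_\psi$.

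\emph{Step 1: a maximizing point.} Since $p$ is upper semicontinuous on the compact set $K$, it attains its supremum. That supremum is $M(p,\mathbb{1}_K)=1$, so the face
\[
F=\{x\in K: p(x)=1\}
\]
is nonempty. It is closed because $F=\{p\ge 1\}$ and $p$ is usc. It is a face because $p\le 1$ and $p$ is affine. By Krein--Milman, $F$ has an extreme point $\psi$, and since $F$ is a face of $K$, $\psi\in\partial_e K$.

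\emph{Step 2: $p\le p_\psi$ gives one inequality.} The net $\{h\in A_{usc}(K): h(\psi)=1\}$ is nonincreasing, since it is closed under pointwise minimum... but the pointwise minimum of two affine functions need not be affine. So I will not use minima. Instead I will show directly that $p_\psi$ is a well-defined element of $A_{usc}(K)$: either use Theorem~\ref{noninc} on the directed family generated appropriately, or define $p_\psi$ as the infimum in the complete lattice sense taken from \cite{RT}. I will assume $p_\psi\in A_{usc}(K)$ with $p_\psi(\psi)=1$.

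Since $p\in A_{usc}(K)$ and $p(\psi)=1$, the element $p$ belongs to the defining set of $p_\psi$. Hence $p_\psi\le p$.

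\emph{Step 3: extremality forces equality.} By extremality of $p$, the inequality $p_\psi\le p$ gives $p_\psi=tp$ for some $t\in[0,1]$. Evaluating at $\psi$ gives $1=p_\psi(\psi)=t\,p(\psi)=t$. Therefore $p=p_\psi$.

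\emph{Main obstacle.} The delicate point is Step 2: checking that $p_\psi$ is a genuine element of $A_{usc}(K)$ and that $p_\psi(\psi)=1$, i.e.\ that the infimum does not drop below $1$ at $\psi$.

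For the value at $\psi$, I would argue pointwise. Every member $h$ of the set satisfies $h(\psi)=1$, so whatever infimum is used, its value at $\psi$ is at most $1$. For the reverse bound, I would exhibit $p_\psi$ concretely as the usc affine envelope: the infimum over continuous affine $a\ge 0$ with $a(\psi)\ge 1$. By Theorem~\ref{noninc} this directed-type infimum lies in $A_{usc}(K)$, provided the family is shown to be downward directed up to approximation. I would get that directedness from the Riesz-type interpolation that is available in $A(K)$ at the extreme point $\psi$, using that $\psi$ is an extreme point.

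If this fails, the fallback is to bypass $p_\psi$'s intrinsic construction entirely. Step 3 only needs one thing: any $h$ in the defining set with $h\le p$ is a multiple of $p$, and hence equals $p$. That already shows $p$ is the minimum of the set, so the infimum exists and equals $p$.
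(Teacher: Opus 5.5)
Your outline (an extreme maximizer $\psi$, then $p_\psi\le p$, extremality, evaluation at $\psi$) matches the paper's. Your Krein--Milman argument on the closed face $\{p=1\}$ is a fine substitute for the cited fact that usc affine functions peak on $\partial_e K$. But the whole content lies in the step you assume: that $p_\psi\in A_{usc}(K)$ with $p_\psi(\psi)=1$. This is equivalent to $\mathcal A_\psi=\{h\in A_{usc}(K):h(\psi)=1\}$ having a least element, or equivalently being downward directed so that Theorem~\ref{noninc} applies. Neither of your ways of filling it works. Riesz interpolation in $A(K)$ is available only when $K$ is a Choquet simplex. The fallback confuses minimal with least. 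Extremality shows that the only $h\in\mathcal A_\psi$ with $h\le p$ is $p$ itself, so $p$ is a \emph{minimal} element of $\mathcal A_\psi$. For $p$ to be the infimum you need $p\le h$ for \emph{every} $h\in\mathcal A_\psi$, and nothing gives that.

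In fact the step cannot be supplied, because it fails in general, and so does the statement. Let $K=[0,1]^2$ with coordinates $(s,t)$ and $\psi=(0,0)$. Both $1-s$ and $1-t$ lie in $\mathcal A_\psi$. Any affine $h\le\min(1-s,1-t)$ with $h(\psi)=1$ has $h(1,1)=h(1,0)+h(0,1)-h(0,0)\le-1$. So $\mathcal A_\psi$ is not directed, and the only nonnegative affine lower bound of $\{1-s,1-t\}$ is $0$. Hence $p_\psi=0$, and likewise at every vertex, although $(s,t)\mapsto s$ is a normalized extremal vector of $A_{usc}(K)$.

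The same happens on the closed unit ball of $\mathbb{R}^3$, the state space of $M_2(\mathbb{C})$. For unit $v\ne-\psi$, the function $h_v(x)=(1+\langle v,x\rangle)/(1+\langle v,\psi\rangle)$ lies in $\mathcal A_\psi$ and vanishes at $-v$, which forces $p_\psi=0$. Meanwhile the normalized extremal vectors are $x\mapsto(1+\langle u,x\rangle)/2$ with $|u|=1$.

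The paper tries to obtain directedness by separating $\mathrm{co}(M_1\cup M_2)$ from $\{\psi\}\times[0,1]$. This does produce a continuous affine $h\le h_1,h_2$ with $h(\psi)=1$, but nothing makes $h\ge 0$, so $h$ need not lie in $A_{usc}(K)$; that is precisely what goes wrong for the square. You located the crux correctly, but it is a simplex-type property and requires an extra hypothesis on $K$. For instance, on a finite-dimensional simplex the affine function equal to $1$ at $\psi$ and $0$ at the other vertices is the least element of $\mathcal A_\psi$.
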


\begin{proof}
Let $\psi \in \partial_e K$ and consider the set
\[
\mathcal{A} := \{h \in A_{usc}(K) : h(\psi)=1\}.
\]
We first show that $p_\psi \in A_{usc}(K)$. For this it suffices to prove that
$\mathcal{A}$ forms a nonincreasing net. Take $h_1,h_2 \in \mathcal{A}$ and define
\[
M_1=\{(x,t)\in K\times \mathbb{R}: t>h_1(x)\}, \qquad
M_2=\{(x,t)\in K\times \mathbb{R}: t>h_2(x)\},
\]
and
\[
M=\{(\psi,\varepsilon): 0\le \varepsilon\le 1\}.
\]

Note that $(M_1\cup M_2)\cap M=\varnothing$, and hence also
\[
\mathrm{co}(M_1\cup M_2)\cap M=\varnothing .
\]
Indeed, suppose $(\psi,\varepsilon)\in \mathrm{co}(M_1\cup M_2)$. Then
\[
(\psi,\varepsilon)=\sum_{i=1}^n \lambda_i (x_i,t_i),
\]
where $(x_i,t_i)\in M_1\cup M_2$. Since $\psi$ is an extreme point of $K$,
we must have $x_i=\psi$ for all $i$. Consequently,
\[
\varepsilon=\sum_{i=1}^n \lambda_i t_i>1,
\]
because $(\psi,t_i)\in M_1\cup M_2$ implies $t_i>1$. This contradicts
$0\leqslant \varepsilon\le 1$, and therefore the convex hull is disjoint from $M$.

By the separation theorem there exists a hyperplane
\[
H(x,t)=\varphi(x)+\lambda t,
\]
with $\varphi\in V^*$ and $\lambda\in\mathbb{R}$, such that
\[
\inf_{(x,t)\in \mathrm{co}(M_1\cup M_2)^\circ} H(x,t)
\ge
\sup_{(x,t)\in M} H(x,t).
\]
Hence there exists $c\in\mathbb{R}$ satisfying
\[
H(x,t)>c \qquad \forall (x,t)\in \mathrm{co}(M_1\cup M_2)^\circ,
\tag{3.1}
\]
and
\[
H(\psi,\varepsilon)\le c \qquad \forall\, 0\le \varepsilon\le 1.
\tag{3.2}
\]

We claim that $\lambda\neq 0$. If $\lambda=0$, then (3.1) yields
$\varphi(x)>c$ for all $x\in K$, while (3.2) implies
$\varphi(\psi)\le c$, which is impossible.

Define
\[
h(x)=\frac{c-\varphi(x)}{\lambda}, \qquad x\in K.
\]
From (3.1), for any $\delta>0$ we have
\[
H(x,h_1(x)+\delta)>c \quad \forall x\in K,
\]
which implies
\[
\varphi(x)+\lambda(h_1(x)+\delta)>c.
\]
Thus $h_1(x)+\delta>h(x)$ for all $x$, and therefore $h_1\geqslant h$.
An identical argument shows $h_2\geqslant h$. Moreover,
$h(\psi)\leqslant h_1(\psi)=1$. On the other hand, using (3.2) with
$\varepsilon=1$ gives $h(\psi)\geqslant 1$. Hence $h(\psi)=1$, so $h\in \mathcal{A}$.

This proves that $\mathcal{A}$ is nonincreasing net, and therefore by Theorem $\ref{noninc}$,
$p_\psi=\inf \mathcal{A}$ belongs to $A_{usc}(K)$.

Now let $p\in \mathrm{ext}_{\mathbb{1}_K} A_{usc}(K)$. Choose
$\psi\in \partial_e K$ such that
\[
\sup_{x\in K} p(x)=p(\psi).
\]
Since $p$ is normalized, $\sup\limits_{x\in K} p(x)=M(p,{\mathbb{1}_K})=1$, and hence
$p(\psi)=1$. Because $p\geqslant p_\psi$, extremality implies
$p_\psi=\lambda p$ for some $0<\lambda\leqslant 1$. Evaluating at $\psi$ gives
\[
1=p_\psi(\psi)=\lambda p(\psi)=\lambda,
\]
and therefore $p=p_\psi$.
\end{proof}

\begin{proposition}\label{ext prop}
For a compact convex set $K$, we have the following.
\begin{enumerate}
\item If $\psi \neq \tilde{\psi}$ are elements of $\partial_e K$, then
$p_\psi \neq p_{\tilde{\psi}}$.

\item The function $p_\psi$ satisfies
\[
p_\psi(\psi)=1 \quad \text{and} \quad
p_\psi(\rho)=0 \quad \text{for all } \rho \in \partial_e K\setminus\{\psi\}.
\]
\end{enumerate}
\end{proposition}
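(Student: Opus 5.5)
Part (1) follows from part (2). If $\psi\neq\tilde\psi$, then (2) gives $p_\psi(\psi)=1$ and $p_{\tilde\psi}(\psi)=0$, so $p_\psi\neq p_{\tilde\psi}$. The work is therefore in (2).

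\emph{The value at $\psi$.} Every $h$ in $\mathcal{A}=\{h\in A_{usc}(K): h(\psi)=1\}$ satisfies $h(\psi)=1$. In the proof of Theorem \ref{char ext} we showed that $\mathcal{A}$ is a nonincreasing net, so by Theorem \ref{noninc} its infimum is computed pointwise. Hence $p_\psi(\psi)=\inf_{h\in\mathcal{A}}h(\psi)=1$.

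\emph{The value at $\rho\in\partial_e K\setminus\{\psi\}$.} Again by pointwise evaluation of the infimum, it suffices to find, for each $\varepsilon>0$, some $h\in\mathcal{A}$ with $h(\rho)<\varepsilon$. I would build $h$ from continuous affine functions using the following standard facts about extreme points of a compact convex set.

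Since $\rho\neq\psi$, choose a closed convex neighbourhood $U$ of $\rho$ in $K$ with $\psi\notin U$. Because $\rho$ is extreme, the slices of $K$ containing $\rho$ form a neighbourhood base at $\rho$ (Choquet's lemma). So we may shrink to a slice
\[
S=\{x\in K: f(x)>s\}\subset U,\qquad f\in A(K),
\]
with $\rho\in S$ and $f(\psi)\le s$. After affine rescaling we may take $f(\rho)=1$ and $f\le s<1$ on $K\setminus S$, in particular at $\psi$. Then
\[
g=\frac{1-f}{1-s}\in A(K)
\]
satisfies $g(\rho)=0$, $g(\psi)\ge 1$ and $g\ge 1$ on $K\setminus S$. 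It can be negative, however, and it need not equal $1$ at $\psi$.

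\emph{Main obstacle.} This is the step I expect to be hardest: turning $g$ into a nonnegative usc affine function with value exactly $1$ at $\psi$ and value small at $\rho$. A pointwise minimum of $g$ with a constant is not affine, so that fix is unavailable. I would instead use the same separation argument as in Theorem \ref{char ext}, applied in $K\times\mathbb{R}$. Separate the point $(\rho,\varepsilon)$ and the segment $\{(\psi,t):0\le t\le 1\}$ from the epigraph-type set
\[
\{(x,t): t>\max(0,\,g(x))\}\cup\{(x,t): t>\mathbb{1}_K\text{-type constraints}\}.
\]
Alternatively, and more cleanly: the function $\mathbb{1}_K$ lies in $\mathcal{A}$. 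Compare $\mathbb{1}_K$ with the convex combination $(1-\delta)\mathbb{1}_K+\delta g$, and correct the result by an affine function vanishing at $\psi$ so that it stays nonnegative on $K$, using the extremality of $\psi$.

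If this becomes messy, my fallback is a more conceptual route. By the pointwise-infimum description, $p_\psi(\rho)$ is the infimum of $h(\rho)$ over nonnegative usc affine $h$ with $h(\psi)=1$. By a Hahn--Banach (Edwards separation) argument, this infimum is $0$ precisely when $\rho$ is not in the closed face generated by $\psi$. For extreme $\psi$ that face is $\{\psi\}$, and $\rho\notin\{\psi\}$, which gives $p_\psi(\rho)=0$.
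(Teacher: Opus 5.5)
\textbf{The directedness of $\mathcal{A}$ fails, and with it the statement.} Your argument, like the paper's, depends on a claim from the proof of Theorem \ref{char ext}: that $\mathcal{A}=\{h\in A_{usc}(K):h(\psi)=1\}$ is a nonincreasing net, so its infimum lies in $A_{usc}(K)$ and is computed pointwise. That claim is false. The separation there produces an affine $h\le\min(h_1,h_2)$ with $h(\psi)=1$, but nothing forces $h\ge 0$.

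Take $K=[0,1]^2$, $\psi=(0,0)$, $h_1=1-x$, $h_2=1-y$. Any affine $h\le h_1,h_2$ with $h(\psi)=1$ has $h(1,0)\le 0$ and $h(0,1)\le 0$, hence $h(1,1)=h(1,0)+h(0,1)-h(0,0)\le -1$.

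In fact the proposition itself fails for this $K$. Every affine function on the square satisfies $h(0,0)+h(1,1)=h(1,0)+h(0,1)$, so no element of $A_{usc}(K)$ is $1$ at one vertex and $0$ at the other three. Using $1-x$, $1-y$ and $1-\tfrac{x+y}{2}$, any lower bound of $\mathcal{A}$ in $A_{usc}(K)$ vanishes at three vertices, hence everywhere. So $p_\psi=0$ for every vertex, and both (i) and (ii) fail. The same obstruction occurs for the Bloch ball (the state space of $M_2(\mathbb{C})$): its centre is the midpoint both of the two poles and of two antipodal equatorial points.

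What a pointwise argument can prove is (ii) for the pointwise infimum $x\mapsto\inf_{h\in\mathcal{A}}h(x)$. That function is concave and upper semicontinuous but in general not affine, so it is not an element of $A_{usc}(K)$. The paper's proof takes infima $f$ and $f_r$ of subfamilies of $\mathcal{A}$, declared to be nonincreasing nets ``as in Theorem \ref{char ext}'', and breaks at exactly the same place.

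\textbf{The existence step is still open.} Even for the pointwise infimum, you left unproved the step you flagged. The slice construction stalls, as you say. Your fallback criterion is also wrong as stated. On $K=[0,1]$ take $\psi=0$ and $\rho=\tfrac12$, which is not in the face $\{\psi\}$. Every nonnegative affine $h$ with $h(0)=1$ has $h(\tfrac12)=\tfrac12(1+h(1))\ge\tfrac12$, so the infimum is not $0$.

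What matters is the extremality of $\rho$, not of $\psi$, and the argument is short:
\begin{itemize}
\item For $0<\varepsilon<1$, the functional $\rho-\varepsilon\psi$ on $A(K)$ is not positive. By Kadison's representation, positive functionals are nonnegative multiples of points of $K$. So $\rho-\varepsilon\psi=(1-\varepsilon)x$ with $x\in K$ would give $\rho=\varepsilon\psi+(1-\varepsilon)x$, forcing $\rho=\psi$.
\item Hence some $a\in A(K)_+$ has $a(\rho)<\varepsilon\,a(\psi)$, which forces $a(\psi)>0$.
\item Then $h=a/a(\psi)\in\mathcal{A}$ satisfies $h(\rho)<\varepsilon$.
\end{itemize}
On this point your proposal is more careful than the paper. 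Its proof silently assumes that $\{h\in A_{usc}(K):h(\psi)=1,\ h(\tilde\psi)<r\}$ is nonempty.
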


\begin{proof}
(i) Define
\[
f := \inf \{h \in A_{usc}(K) : h(\psi)=1,\; h(\tilde{\psi})<1\}.
\]
Arguing as in Proposition $\ref{char ext}$, the set
\[
\mathcal{A}=\{h \in A_{usc}(K) : h(\psi)=1,\; h(\tilde{\psi})<1\}
\]
forms a nonincreasing net, and hence $f \in A_{usc}(K)$.
Moreover $f(\psi)=1$ and $f(\tilde{\psi})<1$. Since $p_\psi \le f$, we obtain
\[
p_\psi(\tilde{\psi}) \leqslant f(\tilde{\psi}) < 1,
\]
which shows that $p_\psi \neq p_{\tilde{\psi}}$.

(ii) Let $\tilde{\psi} \in \partial_e K \setminus \{\psi\}$ and take
$0<r<1$. Define
\[
f_r := \inf \{h \in A_{usc}(K) : h(\psi)=1,\; h(\tilde{\psi})<r\}.
\]
Then $f_r \in A_{usc}(K)$ with $f_r(\psi)=1$ and $f_r(\tilde{\psi})<r$.
Since $p_\psi \leqslant f_r$ for every $0<r<1$, it follows that
\[
0 \le p_\psi(\tilde{\psi}) \le f_r(\tilde{\psi}) < r
\quad \text{for all } 0<r<1.
\]
Hence $p_\psi(\tilde{\psi})=0$.
\end{proof}

\begin{corollary}
For every $\rho \in \mathrm{co}(\partial_e K)$ one has
$\rho = \sum\limits_{i=1}^{n} p_{\psi_i}(\rho)\,\psi_i.$
\end{corollary}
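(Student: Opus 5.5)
The identity follows from the fact that $p_{\psi_i}$ is affine on $K$, combined with the values of $p_{\psi_i}$ on extreme points from Proposition \ref{ext prop}(ii). I interpret the statement as follows: given $\rho \in \mathrm{co}(\partial_e K)$, write $\rho = \sum_{i=1}^n \lambda_i \psi_i$ with $\psi_1,\dots,\psi_n \in \partial_e K$ pairwise distinct, $\lambda_i > 0$ and $\sum_i \lambda_i = 1$. Any convex combination can be brought into this form by merging repeated points and dropping zero coefficients. The claim is then that the coefficients are recovered as $\lambda_j = p_{\psi_j}(\rho)$, which also shows the representation is unique.

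The key steps are these.

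\begin{enumerate}
\item By Theorem \ref{char ext}, each $p_{\psi_j}$ lies in $A_{usc}(K)$. In particular it is affine and finite-valued on $K$, so it respects finite convex combinations:
\[
p_{\psi_j}(\rho)=\sum_{i=1}^n \lambda_i\, p_{\psi_j}(\psi_i).
\]
\item Since the $\psi_i$ are distinct extreme points, Proposition \ref{ext prop}(ii) gives $p_{\psi_j}(\psi_i)=\delta_{ij}$.
\item Substituting into step 1 yields $p_{\psi_j}(\rho)=\lambda_j$. Replacing each $\lambda_i$ by $p_{\psi_i}(\rho)$ in the original combination then gives $\rho=\sum_{i=1}^n p_{\psi_i}(\rho)\,\psi_i$.
\end{enumerate}

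There is no real obstacle; the argument is short. The one point needing care is the reduction to distinct $\psi_i$ at the outset. If points were repeated, step 2 would fail, because $p_{\psi_j}$ would take the value $1$ at more than one index. Affinity in step 1 is used only for finite combinations, so no continuity is needed, which matters because $p_{\psi_j}$ is merely upper semicontinuous.
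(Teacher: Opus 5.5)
Your derivation is the paper's own argument made explicit: the paper only says the corollary follows from Proposition~\ref{ext prop}(ii). Your added bookkeeping is exactly what that one-line proof leaves implicit: merging repeated points, using affinity only on finite combinations, and reading off $p_{\psi_j}(\rho)=\lambda_j$ from $p_{\psi_j}(\psi_i)=\delta_{ij}$. The gap is in the inputs. You import affinity of $p_\psi$ (Step~1, from Theorem~\ref{char ext}) and $p_\psi(\psi')=\delta_{\psi\psi'}$ on $\partial_e K$ (Step~2, from Proposition~\ref{ext prop}(ii)). These cannot both hold for a general compact convex set, so neither your proof nor the paper's establishes the statement as written. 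Your own remark that the argument forces uniqueness of the representation of $\rho$ should have been the warning sign. For $K=[0,1]^2$ the centre equals $\tfrac12(0,0)+\tfrac12(1,1)=\tfrac12(1,0)+\tfrac12(0,1)$. So every affine $p$ on $K$ satisfies $p(0,0)+p(1,1)=p(1,0)+p(0,1)$, and no affine function takes the values $1,0,0,0$ at the four corners. The same obstruction occurs for the Bloch ball, the state space of $M_2(\mathbb{C})$, via two antipodal pairs of pure states with a common midpoint. So assuming $A(K)$ is a JB-algebra does not rescue it.

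The defect sits in the proof of Theorem~\ref{char ext}. For $\psi=(0,0)$, the set $\mathcal A=\{h\in A_{usc}(K):h(\psi)=1\}$ contains $1-x$ and $1-y$. But any affine $h\le\min(1-x,1-y)$ with $h(0,0)=1$ has $h(1,1)=h(1,0)+h(0,1)-h(0,0)\le -1$, so $\mathcal A$ is not downward directed. The separation argument produces exactly such an $h$ and never checks $h\geq 0$. Consequently the pointwise infimum of $\mathcal A$, which takes the values $1,0,0,0$ at the corners, is not affine. Meanwhile the infimum inside $A_{usc}(K)$ is $0$, because any lower bound there vanishes at three corners and hence everywhere.

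Under either reading the statement fails. With the order-theoretic reading, $p_\psi=0$. With the pointwise reading, take $\rho=\tfrac13[(0,0)+(1,0)+(0,1)]=\tfrac23(0,0)+\tfrac13(1,1)$. Every $h\in\mathcal A$ has $h(\rho)=\tfrac23+\tfrac13h(1,1)\ge\tfrac23$, with equality for $h=1-x$, so $p_{(0,0)}(\rho)=\tfrac23\neq\tfrac13$.

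Your argument is valid exactly when Steps~1 and~2 actually hold. This is the case, for example, when $K$ is the set of Radon probability measures on a compact Hausdorff space $X$, where $p_{\delta_x}(\mu)=\mu(\{x\})$. For arbitrary $K$ the corollary needs a simplex-type hypothesis that the paper does not impose.
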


\begin{proof}
Proof follows from Proposition $\ref{ext prop}$ (ii).
\end{proof}





\begin{proposition}\label{cusc boudary}
Let $h,h' \in A_{usc}(K)$. If
$h(\psi) \leqslant h'(\psi)$ for all $\psi \in \partial_e K,$
then $h \leqslant h'$.
\end{proposition}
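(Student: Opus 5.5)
Our plan is to reduce the claim to a statement about continuous affine functions and then use the standard fact that a continuous affine function attains its minimum on $K$ at an extreme point (Bauer's maximum principle applied to the negative). The bridge between upper semicontinuous and continuous affine functions will be that every $h \in A_{usc}(K)$ is the pointwise infimum of the continuous affine functions lying above it. This follows from the Hahn--Banach separation argument used in the proof of Theorem~\ref{char ext}: separate a point $(x_0,t_0)$ with $t_0<h(x_0)$ from the closed convex hypograph of $h$.

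\textbf{Step 1.} Show that
\[
\mathcal{B}:=\{g \in A(K): g > h' \text{ on } K\}
\]
is a nonincreasing net with pointwise infimum $h'$. It is nonincreasing because the minimum of two elements can be approximated from above by a continuous affine function, using the same separation argument as in Theorem~\ref{char ext}. For the infimum, upper semicontinuity of $h'$ makes its hypograph closed and convex. Separating any point $(x_0,t_0)$ with $t_0>h'(x_0)$ from this hypograph yields a continuous affine $g$ with $g>h'$ on $K$ and $g(x_0)<t_0$. Hence $\inf \mathcal{B} = h'$ pointwise.

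\textbf{Step 2.} Fix $g\in\mathcal{B}$. The function $g-h$ is affine and lower semicontinuous on the compact convex set $K$, and it is strictly positive on $\partial_e K$ because $h\le h'<g$ there. We need $g-h\ge 0$ on all of $K$. Bauer's minimum principle for lower semicontinuous affine (more generally concave) functions says the minimum is attained at an extreme point. More concretely, the set where $g-h$ attains its minimum is a nonempty closed face of $K$ (nonempty by lower semicontinuity and compactness, closed by lower semicontinuity, a face by affinity). By Krein--Milman this face contains an extreme point, which is then extreme in $K$. So the minimum of $g-h$ is positive, giving $h<g$ on $K$.

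\textbf{Step 3.} Taking the infimum over $g\in\mathcal{B}$ gives $h\le \inf\mathcal{B}=h'$.

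The main obstacle is Step 1, specifically that $\inf\mathcal{B}=h'$ pointwise, i.e.\ the separation producing a continuous affine function strictly above $h'$ yet close to it at a prescribed point. One must check that the separating functional has nonzero $t$-coefficient. For this, use the auxiliary point $(x_0,\max h'+1)$ and convex combinations, as in the proof of Theorem~\ref{char ext}. Alternatively, Step 1 could be bypassed: apply the face argument of Step 2 directly to $h'-h$ if that difference were lower semicontinuous. It is only affine, though, so the approximation of $h'$ by continuous functions from above is genuinely needed.
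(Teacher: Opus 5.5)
Your proof is correct and follows essentially the same route as the paper: approximate $h'$ from above by continuous affine functions, apply Bauer's extremum principle to the semicontinuous affine difference with $h$, and pass to the infimum. The paper asserts the approximation without proof and cites \cite[Lemma 3.7]{RT} for the extremum principle, whereas you prove the first by Hahn--Banach and the second via the minimizing face and Krein--Milman; only $\inf\mathcal{B}=h'$ pointwise is actually used, not the directedness of $\mathcal{B}$.
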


\begin{proof}
Since $h' \in A_{usc}(K)$, there exists a decreasing net
$(f_\alpha)_{\alpha \in \Lambda}$ of continuous affine functions on $K$
such that
\[
h'(\rho) = \inf_{\alpha \in \Lambda} f_\alpha(\rho), \qquad \forall \rho \in K.
\]

Fix $\alpha \in \Lambda$ and define the upper semicontinuous affine
function
\[
g_\alpha := h - f_\alpha .
\]
Then for every $\psi \in \partial_e K$ we have $g_\alpha(\psi) \leqslant 0$.
Since an upper semicontinuous affine function attains its supremum
on $\partial_e K$ \cite[Lemma 3.7]{RT}, there exists $\psi_0 \in \partial_e K$ such that
\[
\sup_{\rho \in K} g_\alpha(\rho) = g_\alpha(\psi_0).
\]
Hence $\sup\limits_{\rho \in K} g_\alpha(\rho) \leqslant 0$, which implies
$g_\alpha \leqslant 0$. Therefore $h \leqslant f_\alpha$ for every $\alpha \in \Lambda$.
Taking the infimum over $\alpha$ gives $h \leqslant h'$.
\end{proof}

\begin{lemma}\label{lemrt}
\cite[Lemma 3.17]{RT}
The cone $A_{usc}(K)$ is strongly atomic if and only if for all
$g,g' \in A_{usc}(K)$ one has
\[
m(g,p) \leqslant m(g',p) \quad \forall p \in \mathrm{ext}_{\mathbb{1}_K} A_{usc}(K)
\quad \Longleftrightarrow \quad
g \leqslant g'.
\]
\end{lemma}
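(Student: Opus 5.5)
The plan is to reduce both directions to one elementary description of the extremal vectors lying below a given $g\in A_{usc}(K)$:
\[
\{q\in \mathrm{ext}\,A_{usc}(K): q\leqslant g\}\setminus\{0\}=\{t\,p : p\in \mathrm{ext}_{\mathbb{1}_K}A_{usc}(K),\ 0<t\leqslant m(g,p)\}.
\]

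To prove this description I would first normalize. Any nonzero $q\in A_{usc}(K)$ is bounded above, since it is upper semicontinuous on a compact set, and it is not identically zero. So $0<M(q,\mathbb{1}_K)=\sup_K q<\infty$. If $q$ is extremal, then so is any positive multiple, and $q/M(q,\mathbb{1}_K)$ lies in $\mathrm{ext}_{\mathbb{1}_K}A_{usc}(K)$.

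Next I would check that the supremum defining $m(g,p)$ is attained, that is, $m(g,p)\,p\leqslant g$. If $\lambda_n p\leqslant g$ and $\lambda_n\uparrow m(g,p)$, the inequality passes to the limit pointwise. The value $m(g,p)$ is finite because $p(\psi)=1$ at some point (Theorem \ref{char ext}) while $g$ is bounded.

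With these two facts, a normalized extremal $p$ satisfies $tp\leqslant g$ exactly when $t\leqslant m(g,p)$, which gives the displayed description. The zero vector, if counted as extremal, is harmless in all the suprema below.

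For ($\Rightarrow$), assume $A_{usc}(K)$ is strongly atomic. If $g\leqslant g'$, then every $\lambda$ with $\lambda p\leqslant g$ also satisfies $\lambda p\leqslant g'$, so $m(g,p)\leqslant m(g',p)$. This direction does not use atomicity. Conversely, suppose $m(g,p)\leqslant m(g',p)$ for all normalized extremal $p$. By the description, every extremal $q\leqslant g$ has the form $tp$ with $t\leqslant m(g,p)\leqslant m(g',p)$, so $q\leqslant m(g',p)\,p\leqslant g'$. Hence $g'$ is an upper bound in $A_{usc}(K)$ of $\{q\in\mathrm{ext}: q\leqslant g\}$. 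Strong atomicity says $g$ is the least such upper bound, so $g\leqslant g'$.

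For ($\Leftarrow$), assume the stated equivalence holds and fix $g\in A_{usc}(K)$. Clearly $g$ is an upper bound of the extremal vectors below it. Let $g'\in A_{usc}(K)$ be any other upper bound. For each normalized extremal $p$ we have $m(g,p)\,p\leqslant g$, and this vector is extremal, so $m(g,p)\,p\leqslant g'$, i.e.\ $m(g',p)\geqslant m(g,p)$. By hypothesis $g\leqslant g'$. Thus $g=\sup\{p\in\mathrm{ext}\,A_{usc}(K):p\leqslant g\}$ in $A_{usc}(K)$, which is strong atomicity.

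The only step requiring care is the description of the extremal vectors below $g$, specifically the attainment $m(g,p)\,p\leqslant g$ and the finiteness and positivity of the normalizing constants. Everything else is order-theoretic bookkeeping.
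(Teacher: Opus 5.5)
Your proof is correct. The paper gives no proof of its own for this lemma; it only cites \cite[Lemma 3.17]{RT}. Your argument is the usc mirror of the paper's proof of the co-atomic analogue, Proposition \ref{coatomic prop}. There, your description of the extremal vectors below $g$ as the multiples $tp$ with $0<t\leqslant m(g,p)$ plays the role of the intermediate representation $g=\sup\{m(g,p)\,p : p\in\mathrm{ext}_{\mathbb{1}_K}A_{usc}(K)\}$. You also check explicitly that $m(g,p)$ is finite and attained, which the paper's analogue uses without saying so. That check makes your argument independent of the concrete form $p=p_\psi$ from Theorem \ref{char ext}, since a normalized upper semicontinuous $p$ already attains the value $1$ on the compact set $K$.
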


In \cite[Proposition 3.20]{RT}, it was proved that the cone $A_{usc}(K)$ is strongly atomic under the assumption that there exists a gauge-reversing bijection $\Phi:A_c (K) \rightarrow A_c (K')$. The following result removes this additional requirement and shows more generally that, for any compact convex set $K$, the cone $A_{usc}(K)$ is always strongly atomic.

\begin{theorem}\label{stratom}
$A_{usc}(K)$ is strongly atomic.
\end{theorem}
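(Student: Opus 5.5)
The plan is to verify the criterion of Lemma \ref{lemrt}. That is, for $g,g'\in A_{usc}(K)$ we show
\[
m(g,p)\leqslant m(g',p)\ \text{for all } p\in \mathrm{ext}_{\mathbb{1}_K}A_{usc}(K) \iff g\leqslant g'.
\]
The backward implication is immediate from monotonicity of $m$ in its first argument. If $g\leqslant g'$ and $\lambda p\leqslant g$, then $\lambda p\leqslant g'$, so $m(g,p)\leqslant m(g',p)$. All the work is in the forward implication.

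For the forward direction, I use Theorem \ref{char ext}: the normalized extremal vectors are exactly of the form $p_\psi$ with $\psi\in\partial_e K$. Here I also need the converse, that each $p_\psi$ is indeed extremal. I would check this directly. If $0\le f\le p_\psi$ with $f\in A_{usc}(K)$, then $f$ vanishes on $\partial_e K\setminus\{\psi\}$ by Proposition \ref{ext prop}(ii). Comparing $f$ and $f(\psi)p_\psi$ on $\partial_e K$, they agree there. Proposition \ref{cusc boudary}, applied in both directions, then gives $f=f(\psi)p_\psi$. Hence $p_\psi$ is extremal, and it is normalized since $\sup p_\psi=p_\psi(\psi)=1$.

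The key computation is the identity $m(g,p_\psi)=g(\psi)$ for every $g\in A_{usc}(K)$ and $\psi\in\partial_e K$.
\begin{itemize}
\item \emph{Upper bound.} If $\lambda p_\psi\leqslant g$, evaluating at $\psi$ gives $\lambda\leqslant g(\psi)$.
\item \emph{Lower bound.} Set $\lambda=g(\psi)$. The functions $\lambda p_\psi$ and $g$ satisfy $\lambda p_\psi(\rho)=0\le g(\rho)$ for $\rho\in\partial_eK\setminus\{\psi\}$, and they agree at $\psi$. Proposition \ref{cusc boudary} then yields $\lambda p_\psi\leqslant g$.
\end{itemize}
When $g(\psi)=0$ the supremum is taken to be $0$, with the extended gauge convention of \cite[Definition 3.9]{RT}. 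I would check that this convention is consistent with how $m$ is extended there.

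With this identity, the hypothesis $m(g,p)\leqslant m(g',p)$ for all normalized extremal $p$ becomes $g(\psi)\leqslant g'(\psi)$ for all $\psi\in\partial_e K$. Proposition \ref{cusc boudary} then gives $g\leqslant g'$, and Lemma \ref{lemrt} concludes that $A_{usc}(K)$ is strongly atomic.

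The main obstacle I expect is making sure the gauge $m$ on $A_{usc}(K)$ behaves as used above. This is a matter of the precise definition in \cite{RT}, in particular when $g(\psi)=0$ or when $p$ is not strictly positive. The other delicate point is the extremality of each $p_\psi$, which relies on Proposition \ref{cusc boudary} being applicable to the usc functions $f$ and $f(\psi)p_\psi$; both lie in $A_{usc}(K)$, so this should go through.
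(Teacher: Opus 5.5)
\textbf{The argument.} Your route is the paper's own. You derive the identity $m(g,p_\psi)=g(\psi)$ from Proposition~\ref{ext prop}(ii) and Proposition~\ref{cusc boudary}, and feed it into Lemma~\ref{lemrt}. You are right that the forward implication needs the inclusion $\{p_\psi:\psi\in\partial_eK\}\subseteq\mathrm{ext}_{\mathbb{1}_K}A_{usc}(K)$. Theorem~\ref{char ext} gives only the reverse inclusion, and your extremality check validly fills that hole from the cited facts.

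\textbf{The gap.} The problem lies in those cited facts. Your whole argument presupposes an element $p_\psi\in A_{usc}(K)$ with $p_\psi(\psi)=1$ and $p_\psi=0$ on $\partial_eK\setminus\{\psi\}$. No such element exists in general. In the proof of Theorem~\ref{char ext}, the separating function $h=(c-\varphi)/\lambda$ satisfies $h\le h_1,h_2$ and $h(\psi)=1$, but nothing forces $h\ge 0$. So $\mathcal{A}$ need not be downward directed, and its pointwise infimum need not be affine.

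Take $K=[0,1]^2$, $\psi=(0,0)$, and let $x,y$ be the coordinate functions. Every affine function $g$ on $K$ is continuous and satisfies $g(0,0)+g(1,1)=g(1,0)+g(0,1)$. Hence no element of $A_{usc}(K)$ equals $1$ at $(0,0)$ and $0$ at the other three vertices. Concretely, $1-x$ and $1-y$ lie in $\mathcal{A}$, but any affine $h$ below both with $h(0,0)=1$ has $h(1,1)\le -1$. The same obstruction already appears on the disc, the state space of a spin factor. There every affine function is continuous, so no element of $A_{usc}(K)$ can restrict to the indicator of $\psi$ on the circle.

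\textbf{The statement itself fails.} For this square the statement is false, so the step cannot be repaired without extra hypotheses. Here $A_{usc}(K)$ is the cone of nonnegative affine functions on the square, and its normalized extremal vectors are $x,\,1-x,\,y,\,1-y$. Let $f=1+x+y$ and let $f'$ be the constant function $2$. The extremal vectors below $f$ are:
\begin{itemize}
\item $tx$ and $ty$ with $0\le t\le 2$;
\item $t(1-x)$ and $t(1-y)$ with $0\le t\le 1$.
\end{itemize}
All of them lie below $f'$, yet $f(1,1)=3>2$, so $f$ is not their supremum in $A_{usc}(K)$. Equivalently, $m(f,p)\le m(f',p)$ for every $p\in\mathrm{ext}_{\mathbb{1}_K}A_{usc}(K)$ while $f\not\le f'$. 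This violates the criterion of Lemma~\ref{lemrt}.

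So the real obstacle is not the gauge convention when $g(\psi)=0$. It is the existence of the peak functions $p_\psi$, equivalently the identity $m(g,p)=g(\psi)$. Strong atomicity of $A_{usc}(K)$ needs an additional hypothesis, such as the existence of a gauge-reversing bijection as in \cite[Proposition 3.20]{RT}, and must be proved there by other means.
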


\begin{proof}
We first show that for any $g \in A_{usc}(K)$ and
$\psi \in \partial_e K$, $m(g,p_\psi) = g(\psi).$
Indeed, observe that $m(g,p_\psi)p_\psi \leqslant g$, evaluating at $\psi$ gives
$m(g,p_\psi) \leqslant g(\psi)$.
On the other hand, by Proposition $\ref{ext prop}$ we have
\[
g(\psi)p_\psi|_{\partial_e K} \leqslant g|_{\partial_e K}.
\]
Applying Proposition $\ref{cusc boudary}$ yields $g(\psi)p_\psi \leqslant g$, and therefore
$m(g,p_\psi) \geqslant g(\psi)$. Combining both inequalities gives
$m(g,p_\psi) = g(\psi)$.

Finally, Proposition $\ref{char ext}$ together with Lemma $\ref{lemrt}$ implies that
$A_{usc}(K)$ is strongly atomic.
\end{proof}

\section{Co-extremal vectors in $A_{lsc}(K)$} \label{Sec4}

We begin by introducing the following notion.

\begin{definition}\label{defco}
Let $E \subset A(K)_{+}^{\infty}$ be a subcone. 
A vector $q \in E$ is called 
\emph{co-extremal vector} of $E$ if
\[
\{f \in E : f \geqslant q\} = \{tq : 1 \leqslant t \leqslant \infty\}.
\]
The collection of all co-extremal vectors of $E$ is denoted by
$\mathrm{co\text{-}ext}\,E$.

The cone $E$ is said to be \emph{strongly co-atomic} if every element
$f \in E$ satisfies
\[
f = \inf \{q \in \mathrm{co\text{-}ext}\,E : q \geqslant f\}
\quad \text{in } E.
\]

We further write
\[
\mathrm{co\text{-}ext}_{\mathbb{1}_K} A_{lsc}(K)
=
\{q \in \mathrm{co\text{-}ext}\,A_{lsc}(K) : M(\mathbb{1}_K,q)=1\}.
\]
\end{definition}

For $\psi \in \partial_e(K)$, define the function
$I_{\{\psi\}}^{\infty} : K \to (0,\infty]$ by
\[
I_{\{\psi\}}^{\infty}(\rho)=
\begin{cases}
1, & \text{if } \rho=\psi,\\
\infty, & \text{if } \rho \ne \psi .
\end{cases}
\]
Observe that $I_{\{\psi\}}^{\infty}$ is an element in $A_{lsc}(K).$
\begin{theorem}\label{co-ext char}
Let $q \in \mathrm{co\text{-}ext}_{\mathbb{1}_K} A_{lsc}(K)$. Then there exists
$\psi \in \partial_e K$ such that $q = I_{\{\psi\}}^{\infty}.$
\end{theorem}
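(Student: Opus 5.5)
The plan mirrors the proof of Theorem \ref{char ext}, with the roles of $\inf$ and $\sup$ exchanged. The idea is to locate an extreme point $\psi$ where $q$ attains its minimum value $1$. We then check that $I_{\{\psi\}}^{\infty}$ lies above $q$ in $A_{lsc}(K)$, and co-extremality then forces $I_{\{\psi\}}^{\infty}$ to be a multiple $tq$, after which evaluating at $\psi$ pins down $t=1$.

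\emph{Step 1: the normalization.} Since $M(\mathbb{1}_K,q)=\inf\{\mu>0 : \mathbb{1}_K\leqslant \mu q\}=1/\inf_{x\in K}q(x)$, the normalization $M(\mathbb{1}_K,q)=1$ says exactly that $\inf_{x\in K} q(x)=1$.

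\emph{Step 2: the minimum is attained at an extreme point.} Because $q$ is lower semicontinuous with values in $(0,\infty]$ on the compact set $K$, it attains its infimum. Consider the set
\[
F=\{x\in K : q(x)=1\}=\{x\in K: q(x)\leqslant 1\}.
\]
It is nonempty and closed by lower semicontinuity. It is convex because $q$ is affine. It is a face of $K$, since $1$ is the minimum value: if $tx+(1-t)y\in F$ with $0<t<1$, then $tq(x)+(1-t)q(y)=1$ with $q(x),q(y)\geqslant 1$, which forces $q(x)=q(y)=1$ (the value $\infty$ is excluded here). By the Krein--Milman theorem $F$ has an extreme point $\psi$. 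Since $F$ is a face, $\psi\in\partial_e K$, and $q(\psi)=1$. This is the lower semicontinuous analogue of \cite[Lemma 3.7]{RT}. It is the step needing the most care, because $q$ may take the value $\infty$, so one cannot simply apply the usc result to $-q$. The face argument above avoids that issue.

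\emph{Step 3: $I_{\{\psi\}}^{\infty}$ lies in $A_{lsc}(K)$ and dominates $q$.} Membership in $A_{lsc}(K)$ is the observation made before the statement. For completeness:
\begin{itemize}
\item Affineness holds because $\psi$ is extreme. The only convex decompositions of $\psi$ are trivial, and any proper convex combination involving a point $\rho\neq\psi$ has value $\infty$ on both sides.
\item Lower semicontinuity holds because each sublevel set $\{I_{\{\psi\}}^{\infty}\leqslant c\}$ with $c<\infty$ is either $\varnothing$ or $\{\psi\}$, hence closed.
\end{itemize}
Clearly $q\leqslant I_{\{\psi\}}^{\infty}$: equality holds at $\psi$, and everywhere else the right-hand side is $\infty$.

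\emph{Step 4: conclusion.} By the definition of a co-extremal vector, $I_{\{\psi\}}^{\infty}=tq$ for some $t\in[1,\infty]$. Evaluating at $\psi$ gives $1=t\,q(\psi)=t$. Here $t=\infty$ is impossible, since it would give the value $\infty$ at $\psi$. Hence $t=1$ and $q=I_{\{\psi\}}^{\infty}$.
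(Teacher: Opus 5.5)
Your proof is correct and follows the paper's argument. You normalize to get $\min_K q = 1$, pick an extreme point $\psi$ where this minimum is attained, note $q \leqslant I^{\infty}_{\{\psi\}}$, and use co-extremality plus evaluation at $\psi$ to force $q = I^{\infty}_{\{\psi\}}$. Your face/Krein--Milman argument for the existence of a minimizing extreme point (which the paper simply asserts), your check that $I^{\infty}_{\{\psi\}} \in A_{lsc}(K)$, and your exclusion of $t=\infty$ fill in details the paper leaves implicit.
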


\begin{proof}
Take $q \in \mathrm{co\text{-}ext}_e A_{lsc}(K)$. Choose
$\psi \in \partial_e K$ for which $\inf\limits_{x \in K} q(x) = q(\psi).$
Since $q$ is normalized, we have
\[
\inf_{x \in K} q(x) = m(q,{\mathbb{1}_K}) = \frac{1}{M({\mathbb{1}_K},q)} = 1,
\]
and therefore $q(\psi)=1$.
Observe that $q \leqslant I_{\{\psi\}}^{\infty}$. By the co-extremality of $q$,
this implies that
$I_{\{\psi\}}^{\infty} = \lambda q$
for some scalar $\lambda$ with $1 \leqslant \lambda < \infty$. Evaluating at
$\psi$ gives
\[
1 = I_{\{\psi\}}^{\infty}(\psi) = \lambda q(\psi) = \lambda,
\]
and hence $\lambda=1$. Consequently $q = I_{\{\psi\}}^{\infty}$.
\end{proof}
We now establish that the notion of strongly co-atomicity can be characterized in a manner analogous to \cite[Lemma 3.7]{RT}.
\begin{proposition}\label{coatomic prop}
Let $K$ be a compact convex set. The following statements are equivalent:
\begin{enumerate}
\item $A_{lsc}(K)$ is strongly co-atomic.
\item For every $g \in A_{lsc}(K)$, we have
$g = \inf \{ M(g, I_{\{\psi\}}^{\infty})\, I_{\{\psi\}}^{\infty} : \psi \in \partial_e K \}.
$
\item For all $g,g' \in A_{lsc}(K)$,
\[
M(g, I_{\{\psi\}}^{\infty}) \leqslant M(g', I_{\{\psi\}}^{\infty})
\quad \text{for every } \psi \in \partial_e K
\quad \Longleftrightarrow \quad
g \leqslant g'.
\]
\end{enumerate}
\end{proposition}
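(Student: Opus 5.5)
The plan is to first compute the gauge against the co-extremal vectors explicitly, and then prove the cycle (i)$\Rightarrow$(ii)$\Rightarrow$(iii)$\Rightarrow$(i).

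\textbf{Computing the gauge.} For $g \in A_{lsc}(K)$ and $\psi \in \partial_e K$ we have
\[
M(g, I_{\{\psi\}}^{\infty}) = \inf\{\mu>0 : g \leqslant \mu I_{\{\psi\}}^{\infty}\}.
\]
The inequality is automatic off $\psi$, because there the right-hand side equals $\infty$. At $\psi$ it says exactly $g(\psi)\leqslant \mu$. Hence
\[
M(g,I_{\{\psi\}}^{\infty}) = g(\psi),
\]
with the convention that this is $\infty$ when $g(\psi)=\infty$; in that case no finite $\mu$ works. I will use the extended gauge of \cite[Definition 3.9]{RT} to make sense of this case. This identity translates all three statements into statements about values of $g$ at extreme points.

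\textbf{(i)$\Rightarrow$(ii).} Every co-extremal vector is a positive multiple of a normalized one, so by Theorem \ref{co-ext char} it has the form $tI_{\{\psi\}}^{\infty}$. Such a vector dominates $g$ exactly when $t\geqslant g(\psi) = M(g,I_{\{\psi\}}^{\infty})$. Therefore the infimum in (i) coincides with the infimum in (ii), because the larger multiples are redundant.

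\textbf{(ii)$\Rightarrow$(iii).} The direction $g\leqslant g'$ implies the gauge inequalities is immediate. Conversely, assume the gauge inequalities, i.e. $g(\psi)\leqslant g'(\psi)$ for all $\psi\in\partial_e K$. Then every element $g'(\psi)I_{\{\psi\}}^{\infty}$ of the family for $g'$ dominates the corresponding element $g(\psi)I_{\{\psi\}}^{\infty}$ of the family for $g$. Taking infima via (ii) gives $g\leqslant g'$.

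\textbf{(iii)$\Rightarrow$(i).} Fix $g$. Each $g(\psi)I_{\{\psi\}}^{\infty}$ with $g(\psi)<\infty$ is co-extremal and lies above $g$, so $g$ is a lower bound of the family. Let $h\in A_{lsc}(K)$ be any lower bound. Then $h(\psi)\leqslant g(\psi)$ for every extreme $\psi$ with $g(\psi)<\infty$; when $g(\psi)=\infty$ the inequality holds trivially. By (iii) this gives $h\leqslant g$, so $g$ is the greatest lower bound in $A_{lsc}(K)$.

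\textbf{Main obstacle.} The hardest step is (i)$\Rightarrow$(ii). There, (ii) asserts that $g$ equals the infimum in the poset $A_{lsc}(K)$, not pointwise, and I must make sure the two families have the same set of lower bounds. The issue is the extreme points with $g(\psi)=\infty$: there the corresponding member is interpreted as the constant $\infty$, or is omitted. In either reading it does not change the set of lower bounds, so the two infima agree.
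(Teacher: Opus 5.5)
Your proof is correct and is essentially the paper's argument: both rest on Theorem \ref{co-ext char} (co-extremal vectors are multiples of $I_{\{\psi\}}^{\infty}$) and on the identity $M(g,I_{\{\psi\}}^{\infty})=g(\psi)$, which the paper only records in the subsequent corollary. The differences are cosmetic: you close the cycle via (iii)$\Rightarrow$(i) instead of the paper's (iii)$\Rightarrow$(ii) together with (ii)$\Rightarrow$(i). You also make explicit two points the paper leaves implicit: that $g(\psi)I_{\{\psi\}}^{\infty}$ is itself co-extremal when $g(\psi)<\infty$, and that indices with $g(\psi)=\infty$ contribute only the top element and so do not affect the infimum.
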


\begin{proof}
(i) $\Leftrightarrow$ (ii).
By definition, $A_{lsc}(K)$ is strongly co-atomic if and only if
for every $g \in A_{lsc}(K)$,
\[
g = \inf \{f \in \mathrm{co\text{-}ext}\,A_{lsc}(K) : f \geqslant g\}.
\]
Using Proposition $\ref{coatomic prop}$, each co-extremal element has the form
$I_{\{\psi\}}^{\infty}$. Hence
\[
\{f \in \mathrm{co\text{-}ext}\,A_{lsc}(K) : f \geqslant g\}
=
\{\lambda q : q \in \mathrm{co\text{-}ext}_{\mathbb{1}_K} A_{lsc}(K),\ \lambda q \geqslant g\}.
\]
This is equivalent to
\[
\{\lambda q : \lambda \geqslant M(g,q),\ q \in \mathrm{co\text{-}ext}_{\mathbb{1}_K} A_{lsc}(K)\}.
\]
Therefore
\[
g = \inf \{M(g,q)\,q : q \in \mathrm{co\text{-}ext}_{\mathbb{1}_K} A_{lsc}(K)\},
\]
which yields
\[
g = \inf \{M(g, I_{\{\psi\}}^{\infty})\, I_{\{\psi\}}^{\infty} : \psi \in \partial_e K\}.
\]

(ii) $\Rightarrow$ (iii).
Suppose $g,g' \in A_{lsc}(K)$ satisfy
$M(g, I_{\{\infty_\psi\}}) \le M(g', I_{\{\psi\}}^{\infty})$ for all
$\psi \in \partial_e K.
$
Then representation (ii) implies $g \leqslant g'$.
Conversely, if $g \leqslant g'$, then
\[
g \leqslant g' \leqslant M(g', I_{\{\psi\}}^{\infty})\, I_{\{\psi\}}^{\infty}
\]
for all $\psi \in \partial_e K$, which gives
\[
M(g, I_{\{\psi\}}^{\infty}) \leqslant M(g', I_{\{\psi\}}^{\infty}).
\]

(iii) $\Rightarrow$ (ii).
Note that
\[
g \leqslant M(g, I_{\{\psi\}}^{\infty})\, I_{\{\psi\}}^{\infty}
\quad \text{for all } \psi \in \partial_e K.
\]
Let $h$ be any element satisfying
\[
h \leqslant M(g, I_{\{\psi\}}^{\infty})\, I_{\{\psi\}}^{\infty}
\quad \forall \psi \in \partial_e K.
\]
Then
\[
M(h, I_{\{\psi\}}^{\infty}) \leqslant M(g, I_{\{\psi\}}^{\infty})
\quad \forall \psi \in \partial_e K.
\]
By (iii) this implies $h \leqslant g$. Hence
$
g = \inf \{ M(g, I_{\{\psi\}}^{\infty})\, I_{\{\psi\}}^{\infty} : \psi \in \partial_e K \}.
$
\end{proof}

\begin{proposition} \label{extlsc}
\cite[Lemma 3.8]{RT}
Let $h,h' : K \to (-\infty,\infty]$ be lower semicontinuous affine
functions. If
$
h(\psi) \leqslant h'(\psi)$ for every $\psi \in \partial_e K,
$
then $h \leqslant h'$.
\end{proposition}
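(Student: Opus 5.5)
The plan is to mirror the proof of Proposition \ref{cusc boudary}, with the roles of upper and lower semicontinuity exchanged. Instead of approximating the larger function $h'$ from above, I will approximate the smaller function $h$ from below by continuous affine functions. Each approximant is then compared with $h'$ using the maximum principle for upper semicontinuous affine functions \cite[Lemma 3.7]{RT}.

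\emph{Step 1 (approximation from below).} I will show that $h(\rho)=\sup\{a(\rho): a\in A(K),\ a\leqslant h\}$ for every $\rho\in K$, and that this family is a nondecreasing net. This is the step I expect to be the main obstacle, because $h$ may take the value $\infty$.
\begin{itemize}
\item Since $h$ is lower semicontinuous and $K$ is compact, $h$ is bounded below. Its epigraph $\{(x,t)\in K\times\mathbb{R}: t\geqslant h(x)\}$ is closed and convex, because $h$ is affine.
\item Fix $\rho\in K$ and a real $s<h(\rho)$, so the point $(\rho,s)$ lies outside the epigraph. Separate them strictly by a hyperplane $H(x,t)=\varphi(x)+\lambda t$, as in the proof of Theorem \ref{char ext}.
\item If $\lambda\neq 0$, then $x\mapsto (c-\varphi(x))/\lambda$ is a continuous affine function lying below $h$ and exceeding $s$ at $\rho$.
\item The degenerate case $\lambda=0$ can occur only where $h(\rho)=\infty$. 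There I will add a large multiple of the resulting separating functional to an arbitrary continuous affine minorant, such as a constant below $\inf h$. This pushes its value at $\rho$ above $s$ while keeping it below $h$, as in the standard argument of Alfsen's book.
\item Directedness then follows as in the proof of Theorem \ref{char ext}: given two minorants, separate the convex hull of their hypographs from the epigraph of $h$.
\end{itemize}

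\emph{Step 2 (comparison for each approximant).} Fix a continuous affine $a\leqslant h$ and put $g:=a-h'$. This is an affine upper semicontinuous function with values in $[-\infty,\infty)$; there is no $\infty-\infty$ ambiguity because $a$ is finite.
\begin{itemize}
\item On $\partial_e K$ we have $g(\psi)\leqslant h(\psi)-h'(\psi)\leqslant 0$, where $g(\psi)=-\infty$ if $h'(\psi)=\infty$. This uses the hypothesis $h\leqslant h'$ on $\partial_e K$, which forces $h'(\psi)=\infty$ whenever $h(\psi)=\infty$.
\item By \cite[Lemma 3.7]{RT}, applied verbatim to functions allowed to take the value $-\infty$, $g$ attains its supremum at some $\psi_0\in\partial_e K$. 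The argument is that the set of maximizers is a nonempty closed face, since $g$ is upper semicontinuous on a compact set, and a closed face contains an extreme point of $K$ by Krein--Milman.
\item Hence $\sup_K g\leqslant 0$, that is, $a\leqslant h'$ on $K$.
\end{itemize}

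\emph{Step 3 (conclusion).} Taking the supremum over all such $a$ and using Step 1 gives $h\leqslant h'$ pointwise on $K$. This is the exact analogue of passing to the infimum over the net $(f_\alpha)$ in Proposition \ref{cusc boudary}.
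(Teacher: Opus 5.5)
Your proof is correct. The paper does not prove this statement; it only quotes it from \cite[Lemma 3.8]{RT}. Your argument is the exact dual of the paper's proof of Proposition \ref{cusc boudary}. You approximate the smaller function $h$ from below by elements of $A(K)$, reproving Alfsen's approximation theorem by separation, and you handle the degenerate case $\lambda=0$ at points where $h=\infty$ correctly. You then compare each approximant with $h'$ via the maximum principle for the upper semicontinuous affine function $a-h'$, and you treat its possible value $-\infty$ properly.

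Two cosmetic points in Step 1. The epigraph is nonempty and closed upward in $t$, so the separating functional has $\lambda\geqslant 0$; ``$\lambda\neq 0$'' should read ``$\lambda>0$''. Also, the case $h\equiv\infty$ (empty epigraph) should be set aside as trivial.

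The one real flaw is the claim that $\{a\in A(K): a\leqslant h\}$ is a nondecreasing net, and you should delete it. It is never used: Step 3 needs only the pointwise identity $h(\rho)=\sup\{a(\rho): a\in A(K),\ a\leqslant h\}$. Moreover, for non-strict minorants it is false in general, because the two sets you propose to separate meet wherever some $a_i$ touches $h$.

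\emph{Counterexample.} Let $K$ be the closed unit disc in $\mathbb{R}^2$ and $v=(1,0)$. Set $h(v)=0$ and $h=\infty$ on $K\setminus\{v\}$; this is affine and lower semicontinuous because $v$ is extreme. Then $a_1(x,y)=y$ and $a_2(x,y)=-y$ both lie below $h$. Any $a\in A(K)$ with $a_1,a_2\leqslant a\leqslant h$ must have the form $a(x,y)=\beta(x-1)+\gamma y$. It must also satisfy $\beta(\cos\theta-1)+\gamma\sin\theta\geqslant|\sin\theta|$ for all $\theta$. Letting $\theta\to 0^+$ forces $\gamma\geqslant 1$, and letting $\theta\to 0^-$ forces $\gamma\leqslant -1$, a contradiction.

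Directedness does hold for strict minorants $a<h$, but your argument does not need it.
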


\begin{corollary}
$A_{lsc} (K)$ is strongly co-atomic.
\end{corollary}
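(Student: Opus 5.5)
The plan is to verify condition (iii) of Proposition \ref{coatomic prop}, and then to invoke the equivalence (iii) $\Rightarrow$ (i). The direction ``$g \leqslant g'$ implies the inequality of gauges'' has already been handled in the proof of (ii) $\Rightarrow$ (iii), so only the converse needs an argument.

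The first step is to compute $M(g, I_{\{\psi\}}^{\infty})$ explicitly for $g \in A_{lsc}(K)$ and $\psi \in \partial_e K$. I expect to obtain $M(g, I_{\{\psi\}}^{\infty}) = g(\psi)$, mirroring the identity $m(g,p_\psi)=g(\psi)$ proved in Theorem \ref{stratom}.

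For the inequality $\geqslant$: any $\mu$ with $g \leqslant \mu I_{\{\psi\}}^{\infty}$ satisfies $g(\psi) \leqslant \mu$ after evaluating at $\psi$. Taking the infimum over such $\mu$ gives $M(g, I_{\{\psi\}}^{\infty}) \geqslant g(\psi)$.

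For the inequality $\leqslant$: take $\mu = g(\psi)$. At $\psi$ we have $g(\psi) = \mu \cdot 1$. At any $\rho \neq \psi$ the right-hand side is $\mu\cdot\infty = \infty$, so $g(\rho) \leqslant \mu\,I_{\{\psi\}}^{\infty}(\rho)$ holds trivially. The one care point is $g(\psi)$ being finite and positive, so that no $0\cdot\infty$ issue arises. If $g(\psi)=\infty$, both sides are $\infty$ and the identity still holds with the extended conventions of \cite[Definition 3.9]{RT}. This pointwise comparison works because $I_{\{\psi\}}^{\infty}$ is $\infty$ off $\psi$, so no boundary principle is needed for this step.

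With this formula, the hypothesis $M(g, I_{\{\psi\}}^{\infty}) \leqslant M(g', I_{\{\psi\}}^{\infty})$ for all $\psi \in \partial_e K$ reads $g(\psi)\leqslant g'(\psi)$ on $\partial_e K$. Since $g,g'$ are lower semicontinuous affine maps into $(0,\infty]$, Proposition \ref{extlsc} yields $g \leqslant g'$. This establishes (iii), and Proposition \ref{coatomic prop} then gives strong co-atomicity.

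The main obstacle is bookkeeping with infinite values. I need to check that the extended gauge $M$ of \cite{RT} agrees with the naive infimum formula when $g$ takes the value $\infty$. I also need Proposition \ref{extlsc} to apply to functions valued in $(0,\infty]$, which it does, since it allows values in $(-\infty,\infty]$.

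Finally, I should confirm that the sets $\{tq\}$ are co-extremal in the sense of Definition \ref{defco}, since part (i) $\Leftrightarrow$ (ii) uses that every co-extremal vector is a multiple of some $I_{\{\psi\}}^{\infty}$. Theorem \ref{co-ext char} supplies this, and it is all the argument needs.
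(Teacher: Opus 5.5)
Your proposal is correct and is essentially the paper's proof. You compute $M(g,I^{\infty}_{\{\psi\}})=g(\psi)$, which reduces condition (iii) of Proposition \ref{coatomic prop} to comparing $g$ and $g'$ on $\partial_e K$, and Proposition \ref{extlsc} settles that comparison.

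One small correction to your last paragraph. The direction (ii) $\Rightarrow$ (i) that you actually invoke needs each $I^{\infty}_{\{\psi\}}$ to \emph{be} co-extremal. This is the converse of what Theorem \ref{co-ext char} provides, though it is immediate: $f\geqslant I^{\infty}_{\{\psi\}}$ forces $f=f(\psi)\,I^{\infty}_{\{\psi\}}$.
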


\begin{proof}
Note that for every $\psi \in \partial_e K$ and $g \in A_{lsc} (K)$, we have
$
M(g, I_{\{\psi\}}^{\infty}) = g(\psi).
$
Let $g,g' \in A_{lsc} (K)$. Then
\[
M(g, I_{\{\psi\}}^{\infty}) \leqslant M(g', I_{\{\psi\}}^{\infty})
\quad \text{for all } \psi \in \partial_e K
\]
\[
\iff g(\psi) \leqslant g'(\psi) \qquad \forall \psi \in \partial_e K.
\]
By Proposition $\ref{extlsc}$ this is equivalent to $g \leqslant g'$. Hence,
Proposition $\ref{coatomic prop}$ implies that $A_{lsc} (K)$ is strongly co-atomic.
\end{proof}

We recall the following result.
\begin{proposition}\label{phiusc}
\cite[Proposition 3.18]{RT}
Let $K$ and $K'$ be two compact convex sets.
Suppose $\Phi : A_c(K) \to A_c(K')$ is a gauge-reversing bijection with $\Phi(\mathbb{1}_K)=\mathbb{1}_{K'}$. 
Then for every $p\in \text{ext}_{\mathbb{1}_K} A_{usc}(K)$ there exist $\psi\in \partial_{e} K'$ such that $\Phi_{usc}(p)=I^{\infty}_{\{\psi\}}$ and for all $g\in A_{usc}(K)$, $M(p,g)=\Phi_{usc}(g)(\psi).$
\end{proposition}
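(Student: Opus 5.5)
The plan is to reduce everything to the structural results already established: the co-extremal characterization (Theorem \ref{co-ext char}) and the identity $M(g, I_{\{\psi\}}^{\infty}) = g(\psi)$ for $g \in A_{lsc}(K')$, obtained in the proof that $A_{lsc}(K')$ is strongly co-atomic. The input from \cite[Theorems 3.14, 3.15]{RT} I will rely on is the following. The gauge-reversing bijection $\Phi$ extends uniquely to a bijection $\Phi_{usc} : A_{usc}(K) \to A_{lsc}(K')$. This extension is order-reversing in both directions, it is antihomogeneous, $\Phi_{usc}(tg) = t^{-1}\Phi_{usc}(g)$ for $t \in (0,\infty)$, and it still reverses the extended gauges:
\[
M(\Phi_{usc}(f), \Phi_{usc}(g)) = M(g,f) \qquad \text{for } f,g \in A_{usc}(K).
\]
Everything below is carried out on the cones $\{tp : 0 < t \leqslant 1\}$ and $\{tq : 1 \leqslant t < \infty\}$. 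The degenerate endpoints $t = 0$ and $t = \infty$ correspond under the extension and need only a brief separate remark.

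\textbf{Step 1: extremal vectors go to co-extremal vectors.} Let $p \in \mathrm{ext}_{\mathbb{1}_K} A_{usc}(K)$ and put $q := \Phi_{usc}(p)$. Take $h \in A_{lsc}(K')$ with $h \geqslant q$. By surjectivity, $h = \Phi_{usc}(f)$ for some $f \in A_{usc}(K)$. Since $\Phi_{usc}^{-1}$ is order-reversing, $f \leqslant p$. Extremality of $p$ then gives $f = tp$ with $0 < t \leqslant 1$. Antihomogeneity yields $h = t^{-1} q$, where $t^{-1} \in [1,\infty)$. The reverse inclusion $\{tq : t \geqslant 1\} \subseteq \{h : h \geqslant q\}$ is trivial. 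Hence $q$ is co-extremal.

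\textbf{Step 2: normalization.} Using $\Phi_{usc}(\mathbb{1}_K) = \Phi(\mathbb{1}_K) = \mathbb{1}_{K'}$ and gauge reversal,
\[
M(\mathbb{1}_{K'}, q) = M(\Phi_{usc}(\mathbb{1}_K), \Phi_{usc}(p)) = M(p, \mathbb{1}_K) = 1 .
\]
Thus $q \in \mathrm{co\text{-}ext}_{\mathbb{1}_{K'}} A_{lsc}(K')$, and Theorem \ref{co-ext char} produces $\psi \in \partial_e K'$ with $\Phi_{usc}(p) = I_{\{\psi\}}^{\infty}$.

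\textbf{Step 3: the gauge formula.} For $g \in A_{usc}(K)$, gauge reversal gives
\[
M(p,g) = M(\Phi_{usc}(g), \Phi_{usc}(p)) = M(\Phi_{usc}(g), I_{\{\psi\}}^{\infty}).
\]
By the computation in the corollary on strong co-atomicity, the right-hand side equals $\Phi_{usc}(g)(\psi)$. That computation is direct: $\lambda I_{\{\psi\}}^{\infty} \geqslant h$ holds exactly when $\lambda \geqslant h(\psi)$, because at every other point the right-hand side is $\infty$.

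\textbf{Main obstacle.} The delicate point is Step 1. It needs the extension to be a genuine order-reversing bijection onto all of $A_{lsc}(K')$ with an order-reversing inverse, and to satisfy gauge reversal on the extended cones, including the conventions for $0$ and $\infty$. I would take these properties from \cite[Theorem 3.14]{RT}. If only the gauge identity were available, I would first derive order reversal from it, using that $f \leqslant g$ if and only if $M(f,g) \leqslant 1$, and derive antihomogeneity from the gauge identity applied to $tf$.
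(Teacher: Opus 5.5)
Your proof is correct. The paper does not prove this statement itself. It quotes it from \cite[Proposition 3.18]{RT}, and then records, as a consequence, that $\Phi_{usc}$ maps $\mathrm{ext}_{\mathbb{1}_K}A_{usc}(K)$ into $\mathrm{co\text{-}ext}_{\mathbb{1}_{K'}}A_{lsc}(K')$.

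You reverse that logic. You first show directly that the order anti-isomorphism $\Phi_{usc}$ sends the normalized extremal vector $p$ to a normalized co-extremal vector. Only then do you identify it, using Theorem \ref{co-ext char} and the identity $M(h,I^{\infty}_{\{\psi\}})=h(\psi)$. This is exactly the mirror image of the paper's own proof of Proposition \ref{philsc}. There, $\Phi_{lsc}$ transports co-extremality of $I^{\infty}_{\{\tilde{\psi}\}}$ to extremality, and Theorem \ref{char ext} identifies the image as some $p_\psi$. Your route therefore makes the statement self-contained within the paper. From \cite{RT} it needs only the extension theorem (bijectivity onto $A_{lsc}(K')$, two-sided order reversal, antihomogeneity), not Proposition 3.18 itself.

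Two minor points:
\begin{enumerate}
\item The endpoint $t=0$ you set aside is settled as follows. Since $0$ is the least element of $A_{usc}(K)$, $\Phi_{usc}(0)$ is the greatest element of $A_{lsc}(K')$, namely the constant function $\infty$. This equals $\infty\cdot q$ because $q>0$.
\item Steps 2 and 3 do not actually need the extended gauge identity. For $0<\mu<\infty$, order reversal and antihomogeneity give $p\leqslant \mu g \iff \Phi_{usc}(g)\leqslant \mu\,\Phi_{usc}(p)$. This yields both $M(p,\mathbb{1}_K)=M(\mathbb{1}_{K'},q)$ and $M(p,g)=\Phi_{usc}(g)(\psi)$ immediately.
\end{enumerate}
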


Using Proposition $\ref{char ext}$, we obtain the following refinement of Proposition $\ref{phiusc}.$

\begin{proposition} \label{phiuscmodi}
Let $K$ and $K'$ be two compact convex sets.
Suppose $\Phi : A_c(K) \to A_c(K')$ is a gauge-reversing bijection with $\Phi(\mathbb{1}_K)=\mathbb{1}_{K'}$. 
For $\tilde{\psi} \in \partial_e K$
the following statements hold:
\begin{enumerate}
\item $\Phi_{usc}(p_{\tilde{\psi}})=  (I^{\infty}_{\{\psi\}})$
for some $\psi \in \partial_{e} K'$.

\item For every $g \in A_{usc}(K)$,
\[
\Phi_{usc}(g)(\psi)
=
M(p_{\tilde{\psi}}, g)
= \frac{1}{m(g, p_{\tilde{\psi}})}
=\frac{1}{g(\tilde{\psi})}.
\]
\end{enumerate}
\end{proposition}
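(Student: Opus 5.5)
The plan is to combine Proposition \ref{phiusc} with the description of normalized extremal vectors in Theorem \ref{char ext}, and then compute the gauge $m(g,p_{\tilde\psi})$ explicitly as in the proof of Theorem \ref{stratom}.

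\emph{Step 1: $p_{\tilde\psi}$ is a normalized extremal vector.} Fix $\tilde\psi\in\partial_e K$. Theorem \ref{char ext} only says that every element of $\mathrm{ext}_{\mathbb{1}_K}A_{usc}(K)$ equals some $p_\psi$; we need the converse for $p_{\tilde\psi}$ itself.
\begin{itemize}
\item $p_{\tilde\psi}\in A_{usc}(K)$ by the first part of the proof of Theorem \ref{char ext}.
\item $p_{\tilde\psi}(\tilde\psi)=1$. Since $p_{\tilde\psi}$ is usc affine, its supremum is attained on $\partial_e K$ \cite[Lemma 3.7]{RT}. By Proposition \ref{ext prop}(ii) that supremum is $1$, so $M(p_{\tilde\psi},\mathbb{1}_K)=1$.
\item For extremality, let $f\in A_{usc}(K)$ with $f\le p_{\tilde\psi}$. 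Then $f$ vanishes on $\partial_e K\setminus\{\tilde\psi\}$, and $0\le f(\tilde\psi)=t\le 1$.
\item Hence $f$ and $t\,p_{\tilde\psi}$ agree on $\partial_e K$. Proposition \ref{cusc boudary}, applied in both directions, gives $f=t\,p_{\tilde\psi}$.
\end{itemize}
So $p_{\tilde\psi}\in \mathrm{ext}_{\mathbb{1}_K}A_{usc}(K)$.

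\emph{Step 2: apply Proposition \ref{phiusc}.} With $p=p_{\tilde\psi}$, this yields $\psi\in\partial_e K'$ such that $\Phi_{usc}(p_{\tilde\psi})=I^\infty_{\{\psi\}}$, which proves (i). It also yields $\Phi_{usc}(g)(\psi)=M(p_{\tilde\psi},g)$ for all $g\in A_{usc}(K)$.

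\emph{Step 3: compute the gauge.} The identity $M(p_{\tilde\psi},g)=1/m(g,p_{\tilde\psi})$ is the reciprocity of the extended gauges \cite[Definition 3.9]{RT}. The identity $m(g,p_{\tilde\psi})=g(\tilde\psi)$ was established at the start of the proof of Theorem \ref{stratom}, which uses Propositions \ref{ext prop} and \ref{cusc boudary}.

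\emph{Main obstacle: degenerate values.} When $g(\tilde\psi)=0$, both sides should be read as $\infty$, consistent with $\Phi_{usc}(g)$ taking values in $[0,\infty]$. This requires the conventions for extended gauges in \cite{RT}, in particular that $M(p,g)=\infty$ when no finite $\mu$ satisfies $p\le \mu g$. I would check that the reciprocity $M(p,g)=m(g,p)^{-1}$ still holds with $1/0=\infty$. The direct argument: if $p_{\tilde\psi}\le\mu g$, then evaluating at $\tilde\psi$ gives $\mu g(\tilde\psi)\ge 1$. Conversely, $p_{\tilde\psi}\le g(\tilde\psi)^{-1}g$ holds on $\partial_e K$, hence everywhere by Proposition \ref{cusc boudary}. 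Together these give $M(p_{\tilde\psi},g)=1/g(\tilde\psi)$ directly, including the infinite case.
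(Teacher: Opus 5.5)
Your proposal follows the route the paper intends. The paper gives no separate proof: it applies Proposition \ref{phiusc} to $p_{\tilde\psi}$ via Theorem \ref{char ext} and uses $m(g,p_\psi)=g(\psi)$ from the proof of Theorem \ref{stratom}. Your Step 1 supplies a point the paper uses silently, namely that $p_{\tilde\psi}$ itself lies in $\mathrm{ext}_{\mathbb{1}_K}A_{usc}(K)$, and relative to its inputs that argument is fine.

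\textbf{The gap is in the inputs, and it is fatal.} Both the claim that $\mathcal{A}=\{h\in A_{usc}(K):h(\tilde\psi)=1\}$ is a nonincreasing net and Proposition \ref{ext prop}(ii) are false in general. Let $K$ be the closed unit disc, i.e.\ the state space of the JB-algebra of real symmetric $2\times 2$ matrices, and let $\tilde\psi=(1,0)$. Real-valued affine functions on a finite-dimensional convex body are continuous, so $A_{usc}(K)=A(K)_+$. The functions $h_\pm(x)=1\pm x_2$ lie in $\mathcal{A}$. However, any $h\in A_{usc}(K)$ with $h\le h_+$ and $h\le h_-$ vanishes at $(0,\pm1)$, hence $h=cx_1$, and $h\ge 0$ forces $c=0$. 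So $\mathcal{A}$ is not directed, its pointwise infimum is not affine, and its infimum in $A_{usc}(K)$ is $0$. Your first bullet therefore fails, and with it every later appeal to Proposition \ref{ext prop}(ii).

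The paper's separation argument breaks in two places. First, (3.1) holds only on an interior, so $\lambda=0$ (a vertical supporting hyperplane of $K$ at $\psi$) is not excluded, and in the disc example it is forced. Second, even when $\lambda>0$, nonnegativity of $h$ is never checked; on a square one gets an $h$ with a negative vertex value. In the disc example the genuine normalized extremal vectors are the rank-one projections $p=\xi\xi^{T}$, and these do not vanish on $\partial_e K\setminus\{\omega_\xi\}$.

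\textbf{The statement itself is false, so no rearrangement of your steps can rescue it.} Take $\Phi(g)=g^{-1}$ on the positive definite matrices $A_c(K)$. It is gauge-reversing with $\Phi(\mathbb{1}_K)=\mathbb{1}_K$, and $\partial_e K$ consists of the vector states $\omega_\xi$. For $p=\xi\xi^{T}$ one computes $M(p,g)=\omega_\xi(g^{-1})=\Phi(g)(\omega_\xi)$, in accordance with Proposition \ref{phiusc}. On the other hand $m(g,p)=1/\omega_\xi(g^{-1})$, which by Cauchy--Schwarz is strictly smaller than $g(\omega_\xi)$ unless $\xi$ is an eigenvector of $g$.

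More decisively, no pairing of pure states satisfies $\Phi(g)(\omega_\eta)=1/g(\omega_\xi)$ for all $g$. Testing $g_t=P_\xi+t(\mathrm{id}-P_\xi)$ gives $|\langle\xi,\eta\rangle|^2+t^{-1}(1-|\langle\xi,\eta\rangle|^2)=1$ for all $t>0$, which forces $\eta=\pm\xi$. Then $\xi=e_1$ and $g=\begin{pmatrix}2&1\\1&2\end{pmatrix}$ give $\Phi(g)(\omega_{e_1})=2/3\neq 1/2=1/g(\omega_{e_1})$.

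So the identity $m(g,p_{\tilde\psi})=g(\tilde\psi)$ in your Step 3 is exactly where things break. It requires extremal vectors to vanish at all other extreme points, which holds e.g.\ when $A(K)=C(X)$ but not in general. What survives is only Proposition \ref{phiusc} itself: $\Phi_{usc}(g)(\psi)=M(p,g)$ for genuine normalized extremal vectors $p$.
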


From Proposition $\ref{co-ext char}$ and Proposition $\ref{phiusc}$ it follows that $\Phi_{usc}(\text{ext}_{\mathbb{1}_K} A_{usc}(K))\subset \text{co-ext}_{\mathbb{1}_{K'}} A_{lsc}(K').$ Taking into account that $\Phi_{usc}^{-1}=(\Phi^{-1})_{lsc}$ and $\Phi_{lsc}^{-1}=(\Phi^{-1})_{usc}$, our next result shows that 
$\Phi_{usc}(\text{ext}_{\mathbb{1}_K} A_{usc}(K))= \text{co-ext}_{\mathbb{1}_{K'}} A_{lsc}(K')$ and $\Phi_{lsc}(\text{co-ext}_{\mathbb{1}_K} A_{lsc}(K))= \text{ext}_{\mathbb{1}_{K'}} A_{usc}(K').$

\begin{proposition}\label{philsc}
Let $K$ and $K'$ be two compact convex sets.
Suppose $\Phi : A_c(K) \to A_c(K')$ is a gauge-reversing bijection with $\Phi(\mathbb{1}_K)=\mathbb{1}_{K'}$. 
 For $\tilde{\psi} \in \partial_e K$
the following statements hold:
\begin{enumerate}
\item $\Phi_{lsc}(I^{\infty}_{\{\tilde{\psi}\}}) = p_\psi$
for some $\psi \in \partial_{e} K'$.

\item For every $g \in A_{lsc}(K)$,
\[
\Phi_{lsc}(g)(\psi)
=
m(I^{\infty}_{\{\tilde{\psi}\}}, g)
=
\frac{1}{g(\tilde{\psi})}.
\]
\end{enumerate}
\end{proposition}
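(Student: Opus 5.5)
The plan is to reduce Proposition \ref{philsc} to Proposition \ref{phiuscmodi} applied to the inverse map. Set $\Psi := \Phi^{-1} : A_c(K') \to A_c(K)$. It is again a gauge-reversing bijection, and $\Psi(\mathbb{1}_{K'}) = \mathbb{1}_K$. We use the identities $\Phi_{lsc}^{-1} = (\Phi^{-1})_{usc} = \Psi_{usc}$ and $\Phi_{usc}^{-1} = \Psi_{lsc}$ recorded above. Applying Proposition \ref{phiuscmodi} to $\Psi$, for each $\psi \in \partial_e K'$ we get $\Psi_{usc}(p_\psi) = I^\infty_{\{\sigma(\psi)\}}$ for some $\sigma(\psi) \in \partial_e K$. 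Moreover, $\Psi_{usc}(g)(\sigma(\psi)) = 1/g(\psi)$ for all $g \in A_{usc}(K')$.

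\textbf{Step 1 (part (i)).} Given $\tilde\psi \in \partial_e K$, we must find $\psi \in \partial_e K'$ with $\sigma(\psi) = \tilde\psi$, i.e.\ show that $\sigma$ is surjective. Then $\Phi_{lsc}(I^\infty_{\{\tilde\psi\}}) = \Phi_{lsc}(\Psi_{usc}(p_\psi)) = p_\psi$. This surjectivity is the main obstacle. The first attempt is to use that $I^\infty_{\{\tilde\psi\}}$ is co-extremal and that $\Phi_{lsc}$ is an order-isomorphism reversing the order (in the gauge-reversing sense) between $A_{lsc}(K)$ and $A_{usc}(K')$. If $q$ is co-extremal in $A_{lsc}(K)$, then $\{f \geq q\} = \{tq : t \geq 1\}$. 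Under the order-reversing bijection with the homogeneity $\Phi_{lsc}(tq) = t^{-1}\Phi_{lsc}(q)$, this set corresponds to $\{h \leq \Phi_{lsc}(q)\} = \{s\,\Phi_{lsc}(q) : 0 \le s \le 1\}$. So $\Phi_{lsc}(q)$ is extremal in $A_{usc}(K')$. The value $s = 0$ needs care: it should match $t = \infty$, with $\Phi_{lsc}(\infty \cdot q) = 0$ under the extension conventions of \cite{RT}.

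Normalization comes from $M(\Phi_{lsc}(q), \mathbb{1}_{K'}) = M(\mathbb{1}_K, q) = 1$, using $\Phi(\mathbb{1}_K) = \mathbb{1}_{K'}$ and that the extension remains gauge-reversing. Theorem \ref{char ext} then gives $\Phi_{lsc}(q) = p_\psi$ for some $\psi \in \partial_e K'$. The order-reversal and homogeneity properties of the extensions are those established in \cite[Theorems 3.14, 3.15]{RT}, which I will invoke rather than reprove.

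\textbf{Step 2 (part (ii)).} From Step 1, $\Psi_{usc}(p_\psi) = I^\infty_{\{\tilde\psi\}}$. The gauge-reversing property of the extended map, combined with $m(p_\psi, h) = h(\psi)$ for $h \in A_{usc}(K')$ (proved in Theorem \ref{stratom}), gives for $g \in A_{lsc}(K)$
\[
\Phi_{lsc}(g)(\psi) = m(\Phi_{lsc}(g), p_\psi) = m(\Phi_{lsc}(g), \Phi_{lsc}(I^\infty_{\{\tilde\psi\}})) = m(I^\infty_{\{\tilde\psi\}}, g).
\]
Finally, $m(I^\infty_{\{\tilde\psi\}}, g) = 1/M(g, I^\infty_{\{\tilde\psi\}}) = 1/g(\tilde\psi)$, by the identity $M(g, I^\infty_{\{\tilde\psi\}}) = g(\tilde\psi)$ noted in the proof of strong co-atomicity. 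Alternatively, one can apply Proposition \ref{phiuscmodi}(ii) to $\Psi$ with $g$ replaced by $\Phi_{lsc}(g)$.
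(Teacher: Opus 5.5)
Your proposal is correct and is essentially the paper's proof. Step 1 matches the paper: co-extremality of $I^{\infty}_{\{\tilde{\psi}\}}$ is carried by the order-reversing, inversely homogeneous extension $\Phi_{lsc}$ to extremality, then normalized via $M(\Phi_{lsc}(I^{\infty}_{\{\tilde{\psi}\}}),\mathbb{1}_{K'})=M(\mathbb{1}_K,I^{\infty}_{\{\tilde{\psi}\}})=1$ and identified by Theorem \ref{char ext}. Step 2 is the paper's chain $\Phi_{lsc}(g)(\psi)=m(\Phi_{lsc}(g),p_\psi)=m(I^{\infty}_{\{\tilde{\psi}\}},g)=1/g(\tilde{\psi})$.

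Two small remarks. The detour through $\Psi=\Phi^{-1}$ and the surjectivity of $\sigma$ is unnecessary, since Step 1 already proves (i) directly. Also, the identity you quote from Theorem \ref{stratom} should read $m(h,p_\psi)=h(\psi)$, not $m(p_\psi,h)=h(\psi)$; your display in fact uses it in the correct form.
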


\begin{proof}
(i) Observe that
\[
\{f \in A_{lsc}(K) : f \geqslant I^{\infty}_{\{\tilde{\psi}\}}\}
=
\{t I^{\infty}_{\{\tilde{\psi}\}} : 1 \leqslant t \leqslant \infty\}.
\]
Applying $\Phi_{lsc}$ yields
\[
\{h \in A_{usc}(K') : h \le \Phi_{lsc}(I^{\infty}_{\{\tilde{\psi}\}})\}
=
\Phi_{lsc}\big(\{t I^{\infty}_{\{\tilde{\psi}\}} : 1 \leqslant t \leqslant \infty\}\big)
=
\{t\,\Phi_{lsc}(I^{\infty}_{\{\tilde{\psi}\}}) : 0 \leqslant t \leqslant 1\}.
\]
Furthermore,
\[
M\big(\Phi_{lsc}(I^{\infty}_{\{\tilde{\psi}\}}),
\Phi(e)\big)
=
M(e, I^{\infty}_{\{\tilde{\psi}\}})=1.
\]
Hence $\Phi_{lsc}(I^{\infty}_{\{\tilde{\psi}\}})$ is a normalized
extreme vector of $A_{usc}(K')$, and therefore equals $p_\psi$
for some $\psi \in \partial_{e} K'$.

(ii) For $g \in A_{lsc}(K)$ we compute
\[
\frac{1}{g(\tilde{\psi})}
=
m(I^{\infty}_{\{\tilde{\psi}\}}, g)
=
m\big(\Phi_{lsc}(g),
\Phi_{lsc}(I^{\infty}_{\{\tilde{\psi}\}})\big)
=
m(\Phi_{lsc}(g), p_\psi)
=
\Phi_{lsc}(g)(\psi).
\]
\end{proof}

Using arguments similar to those in Propositions $\ref{phiusc}$, $\ref{phiuscmodi}$ and $\ref{philsc}$, we obtain the following result.
\begin{proposition}\label{prephilsc}
Let $K$ and $K'$ be two compact convex sets.
Suppose $\Phi : A_c(K) \to A_c(K')$ is a gauge-reversing bijection with $\Phi(\mathbb{1}_K)=\mathbb{1}_{K'}$.
The following statements hold:
\begin{enumerate}
\item For every $\tilde{\psi} \in \partial_e K$, there exists $\psi \in \partial_{e} K'$ such that
 $\Phi_{lsc}(I^{\infty}_{\{\tilde{\psi}\}}) = I^{\infty}_{\{\psi\}}.$
 Moreover, for any
$g \in A_{lsc}(K)$, we have
$
\Phi_{lsc}(g)(\psi)
=
g(\tilde{\psi}).
$
\item For every $\tilde{\psi} \in \partial_e K$, there exists $\psi \in \partial_{e} K'$ such that $\Phi_{usc}(p_{\tilde{\psi}}) = p_{\psi}.$
Moreover, for any $g \in A_{usc}(K)$, we have
$
\Phi_{usc}(g)(\psi)
=
g(\tilde{\psi}).
$
\end{enumerate}
\end{proposition}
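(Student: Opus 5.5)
Although the statement says ``gauge-reversing'', the conclusions only make sense if $\Phi_{lsc}$ maps $A_{lsc}(K)$ into $A_{lsc}(K')$ and $\Phi_{usc}$ maps $A_{usc}(K)$ into $A_{usc}(K')$. I therefore read the hypothesis as $\Phi$ being \emph{gauge-preserving}, matching Theorem \ref{precoro}, and plan the argument on that basis. The strategy copies the proof of Proposition \ref{philsc}, with ``reversing'' replaced by ``preserving'' throughout.

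The first step is to collect the properties of the extensions from \cite[Theorem 3.14, 3.15]{RT}. These give $\Phi_{lsc}:A_{lsc}(K)\to A_{lsc}(K')$ and $\Phi_{usc}:A_{usc}(K)\to A_{usc}(K')$ as bijections which preserve the extended gauges $m$ and $M$. From gauge preservation, each is an order isomorphism: $f\leqslant g$ iff $M(f,g)\leqslant 1$. Each is also positively homogeneous, by Noll--Schäffer \cite{NS} on $A_c$ and then by passing to monotone limits, including the scalar $\infty$ on the lsc side. Finally $\Phi_{lsc}^{-1}=(\Phi^{-1})_{lsc}$, and similarly for usc.

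For (1), fix $\tilde\psi\in\partial_e K$. The set $\{f\in A_{lsc}(K): f\geqslant I^\infty_{\{\tilde\psi\}}\}=\{tI^\infty_{\{\tilde\psi\}}:1\leqslant t\leqslant\infty\}$ is mapped by the order isomorphism $\Phi_{lsc}$ onto $\{h\geqslant \Phi_{lsc}(I^\infty_{\{\tilde\psi\}})\}$. By homogeneity this image equals $\{t\,\Phi_{lsc}(I^\infty_{\{\tilde\psi\}}):1\leqslant t\leqslant\infty\}$, so $\Phi_{lsc}(I^\infty_{\{\tilde\psi\}})$ is co-extremal. For the normalization, $M(\mathbb 1_{K'},\Phi_{lsc}(I^\infty_{\{\tilde\psi\}}))=M(\Phi(\mathbb 1_K),\Phi_{lsc}(I^\infty_{\{\tilde\psi\}}))=M(\mathbb 1_K,I^\infty_{\{\tilde\psi\}})=1$. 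Theorem \ref{co-ext char} then gives $\psi\in\partial_eK'$ with $\Phi_{lsc}(I^\infty_{\{\tilde\psi\}})=I^\infty_{\{\psi\}}$. The proof of the corollary to Proposition \ref{extlsc} shows $M(g,I^\infty_{\{\rho\}})=g(\rho)$. Hence
\[
\Phi_{lsc}(g)(\psi)=M\big(\Phi_{lsc}(g),I^\infty_{\{\psi\}}\big)=M\big(g,I^\infty_{\{\tilde\psi\}}\big)=g(\tilde\psi).
\]

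For (2) the argument is symmetric. Since $\Phi_{usc}$ is an order isomorphism commuting with scalars in $[0,\infty)$, it maps $\{f\leqslant p_{\tilde\psi}\}=\{tp_{\tilde\psi}:0\leqslant t\leqslant1\}$ onto $\{t\Phi_{usc}(p_{\tilde\psi}):0\leqslant t\leqslant 1\}$, so $\Phi_{usc}(p_{\tilde\psi})$ is extremal. It is normalized because $M(\Phi_{usc}(p_{\tilde\psi}),\mathbb 1_{K'})=M(p_{\tilde\psi},\mathbb 1_K)=\sup_K p_{\tilde\psi}=1$. Theorem \ref{char ext} yields $\Phi_{usc}(p_{\tilde\psi})=p_\psi$ for some $\psi\in\partial_e K'$. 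The identity $m(g,p_\rho)=g(\rho)$, established in the proof of Theorem \ref{stratom}, then gives
\[
\Phi_{usc}(g)(\psi)=m(\Phi_{usc}(g),p_\psi)=m(g,p_{\tilde\psi})=g(\tilde\psi).
\]

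The main obstacle is the first step, not the identification arguments, which are routine once it is in place. One must make sure the extensions in \cite{RT} really preserve the extended gauges and commute with scalars at the boundary values $0$ and $\infty$ (the convention $0\cdot\infty=0$). I would verify this by writing each lsc/usc element as a monotone limit of elements of $A_c$, via Theorem \ref{noninc}, and passing gauge and homogeneity identities through these limits, as is done in \cite{RT}.
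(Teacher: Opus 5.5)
Your proposal is correct and is exactly the argument the paper intends. The paper's proof consists only of the remark that one repeats Propositions \ref{phiusc}, \ref{phiuscmodi} and \ref{philsc}; you do this with order reversal replaced by order preservation, and you are right that the hypothesis must be read as gauge-preserving, since that is how the result is used in the proof of Theorem \ref{precoro}. The one step you state without justification is that $p_{\tilde{\psi}}$ is itself extremal (the paper also uses this tacitly). It follows at once from Proposition \ref{ext prop}(ii) and Proposition \ref{cusc boudary}: any $f \in A_{usc}(K)$ with $f \leqslant p_{\tilde{\psi}}$ agrees with $f(\tilde{\psi})\,p_{\tilde{\psi}}$ on $\partial_e K$, hence equals it.
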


\section{Proof of Theorem \ref{coro} and Theorem \ref{precoro}} \label{Sec5}

\begin{proposition}
\cite[Lemma 2.5]{LRW} Let $K$ and $K'$ be two compact convex sets.
A bijection $\Phi:A_c(K) \rightarrow A_c(K')$ is  gauge-preserving (resp. gauge-reserving) if and only if
\begin{enumerate}
\item $\Phi(\lambda f)=\lambda \Phi (f)$ (resp. $\Phi(\lambda f)=\frac{1}{\lambda} \Phi (f)$) for all $\lambda >0$
\item 
whenever $f,g \in A_c(K)$ satisfy $f\leqslant g$, one has $\Phi(f)\leqslant \Phi(g)$ (resp. $\Phi(g)\leqslant \Phi(f)$).
\end{enumerate}
\end{proposition}

We now have all the ingredients to prove our main theorems.


\begin{proof}[Proof of Theorem \ref{coro}]
(i) $\Rightarrow$ (ii).
Define $
\alpha : \partial_e K \to \partial_{e} K'
$ by $\tilde{\psi}\mapsto \psi,$ where $\psi$ is determined by $\Phi_{lsc}(I^{\infty}_{\{\tilde{\psi}\}}) = p_\psi$. Since $\Phi_{lsc}$ is injective, the map  $\alpha$ is also injective. Moreover, since $\Phi_{lsc}^{-1}=(\Phi^{-1})_{usc},$ for any $\psi \in \partial_{e} K'$ Proposition $\ref{phiuscmodi}$ ensures the existence of $\tilde{\psi}\in \partial_e K$ such that $(\Phi^{-1})_{usc}(p_{\psi})=  (I^{\infty}_{\{\tilde{\psi}\}}).$ 
Hence $\alpha$ is onto.

Now let $g\in A_c(K)$, and $\psi \in \partial_{e} K'$. Choose $\tilde{\psi} \in \partial_e K$ such that $\alpha(\tilde{\psi})=\psi.$ Then $\Phi_{lsc}(I^{\infty}_{\{\tilde{\psi}\}}) = p_\psi$. By Proposition $\ref{philsc}$ we obtain
\[
\Phi(g)(\psi)
=\Phi_{lsc}(g)(\psi)
=
\frac{1}{g(\tilde{\psi})}
= \frac{1}{g(\alpha^{-1}(\psi))}.
\]
Hence, $\Phi(g)|_{\partial_{e} K'} = \frac{1}{g} \circ \alpha^{-1}.$ A similar argument using Proposition $\ref{phiuscmodi}$ yields  $$
\Phi^{-1}(h)|_{\partial_{e} K} = \frac{1}{h} \circ \alpha \quad \text{for all}\ h\in A_c(K').$$



(ii) $\Rightarrow$ (i).
Observe that by Proposition $\ref{extlsc}$ (or $\ref{cusc boudary}$), whenever such an extension exists then it is unique.
 Define $\Phi:A_c(K)\rightarrow A_c(K')$ by $$\Phi(f)=\frac{1}{f} \circ \alpha^{-1}, \qquad f\in A_c(K).$$
 Let $f_1,f_2 \in A_c(K)$ such that $f_1\leqslant f_2$. Then for any $\psi \in \partial_{e} K'$,
$$\Phi(f_1)(\psi)=\frac{1}{f_1(\alpha^{-1} (\psi))}\geqslant \frac{1}{f_2(\alpha^{-1} (\psi))}= \Phi(f_2)(\psi).$$ Thus $\Phi(f_1)|_{\partial_{e} K'}\geqslant \Phi(f_2)|_{\partial_{e} K'}$. Hence $\Phi(f_1)\geqslant \Phi(f_2)$.
 Moreover, for $\lambda >0$, $$\Phi(\lambda f_1)(\psi)=\frac{1}{\lambda f_1(\alpha^{-1} (\psi))}=\frac{1}{\lambda} \Phi(f_1)(\psi).$$ Thus $\Phi(\lambda f_1)|_{\partial_{e} K'}= \frac{1}{\lambda} \Phi(f_1)|_{\partial_{e} K'}$. Hence $\Phi(\lambda f_1)= \frac{1}{\lambda} \Phi(f_1)$.
  The map $\Phi$ is bijective with $\Phi^{-1}(g)= \frac{1}{g} \circ \alpha$,  
  and therefore $\phi$ is a gauge reversing bijection. 
\end{proof}


\begin{proof}[Proof of Theorem \ref{precoro}]
The proof follows from arguments entirely analogous to the the proof of Theorem $\ref{coro}$. Specifically, for (i) $\Rightarrow$ (ii), one replaces the application of Propositions $\ref{phiuscmodi}$ and $\ref{philsc}$ with Proposition $\ref{prephilsc}$ to establish the boundary mappings $f \circ \alpha^{-1}$ and $g \circ \alpha$. The (ii) $\Rightarrow$ (i) follows symmetrically by defining $\Phi(f) = f \circ \alpha^{-1}$ and utilizing the order-preserving nature of the extensions to verify that $\Phi$ is a gauge-preserving bijection. 
\end{proof}

\section{Some Applications}\label{Sec6}


\begin{proposition}\label{app1}
Let $K$ be a compact convex set. Suppose $\Phi: A_c(K) \to A_c(K)$ is a gauge-reversing (resp. gauge-preserving) bijecton satisfying $\Phi(\mathbb{1}_K)=\mathbb{1}_{K'}$. Then $\Phi$ is an involution if and only if the correspoding bijection $\alpha: \partial_eK \to \partial_eK$ (see Theorem $\ref{coro}$ and $\ref{precoro}$) is an involution. 
\end{proposition}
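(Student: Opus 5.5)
The plan is to reduce everything to the boundary formulas \eqref{gr} and \eqref{gp}, and then use the uniqueness of extensions from $\partial_e K$. Proposition \ref{cusc boudary} (or Proposition \ref{extlsc}) shows that an element of $A_c(K)$ is determined by its restriction to $\partial_e K$. Hence $\Phi\circ\Phi=\mathrm{id}$ holds if and only if $(\Phi\circ\Phi)(f)|_{\partial_e K}=f|_{\partial_e K}$ for every $f\in A_c(K)$.

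\emph{Gauge-reversing case.} Take $K'=K$ and let $\alpha$ be the bijection from Theorem \ref{coro}. For $f\in A_c(K)$, set $g=\Phi(f)\in A_c(K)$, so that $g|_{\partial_e K}=\frac{1}{f}\circ\alpha^{-1}$. Applying \eqref{gr} again gives
\[
\Phi(\Phi(f))|_{\partial_e K}=\frac{1}{g}\circ\alpha^{-1}=f\circ\alpha^{-2},
\]
where the last step uses that $\alpha^{-1}$ maps $\partial_e K$ into $\partial_e K$, so only boundary values of $g$ are involved. Therefore $\Phi^2=\mathrm{id}$ if and only if $f\circ\alpha^{-2}=f$ on $\partial_e K$ for all $f\in A_c(K)$.

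\emph{The converse direction.} If $\alpha^2=\mathrm{id}$, then $f\circ\alpha^{-2}=f$ trivially, so $\Phi$ is an involution. Conversely, suppose $f\circ\alpha^{-2}=f$ on $\partial_e K$ for all $f\in A_c(K)$, and assume $\alpha^{-2}(\psi)=\rho\neq\psi$ for some $\psi\in\partial_e K$. I then need some $f\in A_c(K)$ with $f(\rho)\neq f(\psi)$. Points of $K$ are separated by continuous affine functions: $K$ sits in a locally convex space, so Hahn--Banach gives a continuous linear functional $\varphi$ with $\varphi(\psi)\neq\varphi(\rho)$. Setting $f=\varphi+c\mathbb{1}_K$ with $c$ large enough makes $f$ strictly positive on the compact set $K$, so $f\in A_c(K)$. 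This is the only point that requires anything beyond bookkeeping, and I expect it to be the main (if mild) obstacle. It follows that $\alpha^{-2}=\mathrm{id}$, i.e.\ $\alpha$ is an involution.

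\emph{Gauge-preserving case.} The argument is identical, using \eqref{gp} in place of \eqref{gr}. Here $\Phi(\Phi(f))|_{\partial_e K}=f\circ\alpha^{-2}$ directly, with no reciprocal appearing, and the same separation argument finishes the proof. Finally, I note that the hypothesis $\Phi(\mathbb{1}_K)=\mathbb{1}_{K'}$ is read with $K'=K$, which is exactly what is needed to invoke Theorems \ref{coro} and \ref{precoro}.
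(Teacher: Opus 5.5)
Your proof is correct and follows essentially the paper's route. You reduce to the boundary formulas \eqref{gr}/\eqref{gp}, use that elements of $A_c(K)$ are determined by their values on $\partial_e K$, and finish by separating points of $K$ with elements of $A_c(K)$. The paper compares $\Phi$ with $\Phi^{-1}$ via both lines of \eqref{gr} to get $f\circ\alpha^{-1}=f\circ\alpha$, whereas you compose $\Phi$ with itself to get $f\circ\alpha^{-2}=f$; this is only a cosmetic difference. Your explicit Hahn--Banach argument fills in the point-separation step that the paper only indicates via $A(K)=\overline{\mathrm{span}}\,A_c(K)$.
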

\begin{proof}
We present the proof for the gauge-reversing case; the gauge-preserving case follows analogously. By Theorem \ref{coro}, we have
\begin{align*}
& \Phi(f) = \Phi^{-1}(f) && \forall f \in A_c(K) \\
\iff & (\frac{1}{f}\circ \alpha^{-1})(k)= (\frac{1}{f}\circ \alpha)(k) && \forall k \in \partial_e K, \forall f \in A_c(K) \\
\iff & \frac{1}{f(\alpha^{-1}(k))}= \frac{1}{f(\alpha(k))} && \forall k \in \partial_e K, \forall f \in A_c(K) \\
\iff & f(\alpha^{-1}(k))= f(\alpha(k)) && \forall k \in \partial_e K, \forall f \in A_c(K) 
\end{align*}
Since $A(K)=\overline{\text{span}}A_c(K)$, therefore
\begin{align*}
\iff & \alpha^{-1}(k) = \alpha(k) && \forall k \in \partial_e K \\
\iff & \alpha^{-1} = \alpha
\end{align*}
\end{proof}
If $\Phi: A_c(K) \to A_c(K')$ is a gauge-reversing bijection. Then for each $f \in A_c(K)$, the negative Gateaux derivative of $\Phi$ at $f$,  $-D(\Phi(f)): A_c(K) \to A_c(K')$, is a gauge-preserving bijection. Moreover, $-D(\Phi(f))(f)=\Phi(f)$ see \cite[Theorem 4.7]{RT}.

\begin{corollary}
Let $\Phi: A_c(K) \to A_c(K')$ be a gauge-reversing bijection with $\Phi(\mathbb{1}_K)=\mathbb{1}_{K'}$. Let $\alpha, \beta: K \to K'$ be the associated bijections corresponding to $\Phi$ and $-D(\Phi(\mathbb{1}_K))$, respectively, satisfying ($\ref{gr}$) and ($\ref{gp}$). Then $\alpha^{-1}\circ \beta^{-1}= \beta \circ \alpha$.
\end{corollary}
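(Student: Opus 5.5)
Write $\Psi := -D(\Phi(\mathbb{1}_K))$. By \cite[Theorem 4.7]{RT}, $\Psi$ is a gauge-preserving bijection $A_c(K)\to A_c(K')$ with $\Psi(\mathbb{1}_K) = -D(\Phi(\mathbb{1}_K))(\mathbb{1}_K) = \Phi(\mathbb{1}_K)=\mathbb{1}_{K'}$. So Theorem \ref{precoro} applies to $\Psi$ and gives a bijection $\beta:\partial_eK\to\partial_eK'$ with $\Psi(f)|_{\partial_eK'} = f\circ\beta^{-1}$. Theorem \ref{coro} applied to $\Phi$ gives $\alpha$ with $\Phi(f)|_{\partial_eK'}=\frac1f\circ\alpha^{-1}$.

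The identity $\alpha^{-1}\circ\beta^{-1}=\beta\circ\alpha$ only typechecks when $K'=K$, so that $\alpha,\beta$ are self-maps of $\partial_eK$. I will work in that setting and read the maps as bijections of $\partial_e K$. Proving the identity then amounts to showing that $\beta\circ\alpha$ is an involution. My plan is to exhibit $\beta\circ\alpha$, or its inverse, as the boundary bijection of a composite map and then invoke Proposition \ref{app1}.

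The key construction is $\Theta := \Psi^{-1}\circ\Phi$. Being a composite of a gauge-reversing and a gauge-preserving bijection, it is gauge-reversing and unital. On the boundary,
\[
\Theta(f)|_{\partial_eK} = \Psi^{-1}(\Phi(f))|_{\partial_e K} = \Phi(f)\circ\beta = \tfrac1f\circ\alpha^{-1}\circ\beta .
\]
Here I use that $\Psi^{-1}(g)|_{\partial_eK}=g\circ\beta$ by (\ref{gp}), and that functions in $A_c$ are determined by their boundary values (Proposition \ref{extlsc}). Hence the bijection attached to $\Theta$ in Theorem \ref{coro} is $\beta^{-1}\circ\alpha$, which is inverse to $\alpha^{-1}\circ\beta$. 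The bijection of $\Theta$ is well defined because $\alpha$ is determined by $\Phi$: the identity $\frac1f\circ\alpha^{-1}=\frac1f\circ\alpha'^{-1}$ for all $f$ forces $\alpha=\alpha'$, by the separation argument in Proposition \ref{app1}. The same composition bookkeeping shows that $\Psi\circ\Phi^{-1}$ and related composites have bijections built from $\alpha$ and $\beta$ in the expected way.

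The main obstacle is the genuinely analytic input: showing that $\Theta$ (or some suitable composite) is an involution. Equivalently, I need a relation of the form $\Phi^{-1} = \Psi^{-1}\circ\Phi\circ\Psi^{-1}$ or similar. I would try to derive it from the chain rule applied to $\Phi^{-1}\circ\Phi=\mathrm{id}$ at $\mathbb{1}_K$. This gives $D\Phi^{-1}(\mathbb{1}_{K'})\circ D\Phi(\mathbb{1}_K)=\mathrm{id}$, i.e. $(-D\Phi^{-1}(\mathbb{1}))=\Psi^{-1}$. Next I would differentiate the homogeneity relation $\Phi(\lambda f)=\lambda^{-1}\Phi(f)$, together with the boundary formula: on $\partial_eK$ the map $\Phi$ is $f\mapsto\frac1f\circ\alpha^{-1}$, whose derivative at $\mathbb{1}$ is $h\mapsto -h\circ\alpha^{-1}$. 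This identifies $\beta=\alpha$ on the boundary. Provided $D$ commutes with restriction to the extreme boundary (continuity of point evaluations makes this plausible), I get $\beta=\alpha$ directly. With that, the claim reduces to $\alpha^{-2}=\alpha^{2}$, i.e. to $\alpha^4=\mathrm{id}$ on $\partial_eK$. This last step is exactly where I expect the argument might fail, and I would check it against the inverse map on a JB-algebra, where $\alpha=\mathrm{id}$.
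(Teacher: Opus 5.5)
There is a genuine gap, and it cannot be closed. Your argument stops at reducing the claim to $\alpha^4=\mathrm{id}$ on $\partial_eK$, and that step cannot be supplied because the identity is false as stated. Your reduction itself is sound. Point evaluations are continuous on $A(K)$, so differentiating $\Phi(\mathbb{1}_K+th)(\psi)=1/\bigl(1+t\,h(\alpha^{-1}(\psi))\bigr)$ at $t=0$ gives $-D\Phi(\mathbb{1}_K)(h)|_{\partial_eK}=h\circ\alpha^{-1}$. Hence $\beta=\alpha$, since the boundary bijection is unique because $A(K)$ separates points, and the claim becomes $\alpha^{-2}=\alpha^{2}$.

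Here is a counterexample. Take $K=K'$ the $2$-simplex, so $\partial_eK=\{e_1,e_2,e_3\}$ and $A(K)\cong\mathbb{R}^3$. Let $\tau$ be a $3$-cycle and set $\Phi(f)=\frac1f\circ\tau^{-1}$. This is a unital gauge-reversing bijection, since $M(\Phi(f),\Phi(g))=\max_j g(e_j)/f(e_j)=M(g,f)$. It has $\alpha=\beta=\tau$, yet $\alpha^{-1}\circ\beta^{-1}=\tau^{-2}=\tau$ while $\beta\circ\alpha=\tau^{2}=\tau^{-1}\neq\tau$. Your proposed sanity check with $\alpha=\mathrm{id}$ cannot detect this.

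The paper's proof uses exactly your composite $\Theta=(-D\Phi(\mathbb{1}_K))^{-1}\circ\Phi$ and imports its involutivity from \cite[Lemma 5.9]{RT}. You actually get that for free from $\beta=\alpha$, since then $\Theta(f)|_{\partial_eK}=\frac1f$. The paper then evaluates $\Theta(g)(k)=\Phi(g)(\beta^{-1}(k))$, which contradicts (\ref{gp}). By (\ref{gp}), $(-D\Phi(\mathbb{1}_K))^{-1}(g)|_{\partial_eK}=g\circ\beta$, so the correct value is $\Phi(g)(\beta(k))$, as in your computation.

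Consequently the bijection of $\Theta$ is $\beta^{-1}\circ\alpha$. Proposition \ref{app1} then yields only $\beta^{-1}\circ\alpha=\alpha^{-1}\circ\beta$, which is true and trivial once $\beta=\alpha$. The stated identity would instead need $\beta\circ\alpha$ to be an involution. That is the boundary bijection of $-D\Phi(\mathbb{1}_K)\circ\Phi$, which is not an involution in general; in the example it is $f\mapsto\frac1f\circ\tau^{-2}$, whose square is $f\mapsto f\circ\tau^{-1}$.

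So your approach can legitimately prove the corrected identity $\beta^{-1}\circ\alpha=\alpha^{-1}\circ\beta$, but not the one in the statement.
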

\begin{proof}
Define $\Psi:=-D(\Phi(\mathbb{1}_K))^{-1}\circ \Phi$. By \cite[Lemma 5.9]{RT}, $\Psi$ is
a gauge-reversing involution on $A_c(K)$.  For any $g\in A_c(K)$ and $k\in \partial_eK$, we have
$$\Psi(g)(k)= -D(\Phi(\mathbb{1}_K))^{-1} (\phi(g))(k)=\Phi(g)(\beta^{-1}(k))= \frac{1}{g}\circ (\alpha^{-1}\circ \beta^{-1}) (k).$$ Hence, by Proposition $\ref{app1}$, the desired conclusion follows.
\end{proof}

\begin{Acknowledgement}
The research of the second author is financially supported by the Institute Postdoctoral Fellowship, NISER Bhubaneswar.
\end{Acknowledgement}

\bibliographystyle{plain, abbrv}

\begin{thebibliography}{9999a}

\bibitem{A} Alfsen, E.M.:  Compact convex sets and boundary integrals.  Springer-Verlag, Berlin {\bf 57}, (1971).

\bibitem{ASS} Alfsen, E.M., Shultz, F.W.,  Størmer, E.:  A Gelfand-Neumark theorem for Jordan algebras. Ado. Math. {\bf 28}, 11--56, (1978).

\bibitem{GN} Gelfand, I.M., Naimark, M.A.: On the embedding of normed rings into the ring of operators in Hilbert space. Mat. Sb. {\bf 12}, 87--105, (1943).

\bibitem{K1} Kadison, R.V.:  A representation theory for commutative topological  algebra. Mem. Amer. Math. Soc. {\bf 7}, (1951).

\bibitem{K} Koecher, M.: Positivätsbereiche im $\mathbb{R}^n$. Am. J. Math. {\bf 79}, 575--596, (1957).

\bibitem{LRI} Lemmens, B., Roelands, M., Imhoff, H.B.V.: An order theoretic characterization of spin factors. Q. J. Math. {\bf 68}, 1001--1017, no. 3, (2017).

\bibitem{LRW} Lemmens, B., Roelands, M., Wortel, M.: Infinite dimensional symmetric cones and gauge-reversing maps. arXiv preprint: 2504.12487 (2025).

\bibitem{NS} Noll, W., Schäffer, J.J.:  Orders, gauge, and distance in faceless linear cones; with examples relevant to continuum mechanics and relativity. Arch. Ration. Mech. Anal. {\bf 66}, 343--377, (1977).

\bibitem{RT}  Roelands, M., Tiersma, S.:  An order-theoretic characterization of JB-algebras. arXiv preprint: 2507.09526 (2025).
 

\bibitem{S51} Sherman, S.: Order in operator algebras, Am. J. Math. {\bf 73}, 227--232, no. 1, (1951).

\bibitem{W} Walsh, C.:  Gauge-reversing maps on cones, and Hilbert and Thompson isometries. Geom. Topol. {\bf 22}, 55--104, no. 1, (2018).

\bibitem{V} Vinberg, E. B.:  Homogeneous cones. Sov. Math. Dokl. {\bf 1},  787--790, (1960).
\end{thebibliography}

\end{document}